\documentclass[12pt]{amsart}
\usepackage{amsmath,amsthm,amssymb,IMjournal}
\usepackage{graphicx}
\usepackage{caption}
\captionsetup{position=below}
%%%%%%%%%%%%%%%%%%%%%%%%%%%%%%%%%%%%%%%%%%%
\newtheorem{theorem}{Theorem}[section]
\newtheorem{acknowledgement}[theorem]{Acknowledgements}

\newtheorem{corollary}[theorem]{Corollary}

\newtheorem{definition}[theorem]{Definition}

\newtheorem{lemma}[theorem]{Lemma}

\newtheorem{proposition}[theorem]{Proposition}
\newtheorem{remark}[theorem]{Remark}

\usepackage{fullpage}
\begin{document}
\newtheorem{The}{Theorem}[section]

\numberwithin{equation}{section}

\title{QUADRATIC\ DIFFERENTIALS $A\left( z-a\right) \left( z-b\right)
dz^{2}/(z-c)^{2}\ $ AND ALGEBRAIC CAUCHY TRANSFORM}

\author{\|Mohamed Jalel |Atia|, Gabes.Tunisia,
        \\ \|Faouzi |Thabet|, Gabes.Tunisia}

%\rec {February 12, 2015}

%\dedicatory{Cordially dedicated to ...}

\abstract  In this paper, we discuss the
representability almost everywhere (a.e.) in $\mathbb{C}$ of an irreducible algebraic function as the Cauchy transform of a signed
measure supported on a finite number of compact semi-analytic curves and a
finite number of isolated points. We discuss the existence of critical
trajectories of a family of quadratic differentials $\displaystyle\frac{%
A\left( z-a\right) \left( z-b\right) }{(z-c)^{2}}dz^{2}$.
\endabstract

\keywords
  Algebraic equation.\ Cauchy transform. Quadratic differentials.
\endkeywords

%\subjclass
%30L05-28A99
%\endsubjclass

%\thanks
%   The research has been supported by ...
%\endthanks
\section{Introduction}
We remind the problem set by B. Shapiro \cite{shapiro}: Is it true that if
there exists a signed measure whose Cauchy transform satisfies an
irreducible algebraic equation a.e. in $%
%TCIMACRO{\U{2102} }%
%BeginExpansion
\mathbb{C}
%EndExpansion
$ then there exists, in general, another signed measure whose Cauchy
transform satisfies a.e. in $%
%TCIMACRO{\U{2102} }%
%BeginExpansion
\mathbb{C}
%EndExpansion
$ the same algebraic equation and whose support is a finite union of compact
curves and isolated points? Does there exist such a measure with singularity
on each connected component of its support?

\bigskip

The aim of this paper is to solve the above problem in the case of algebraic
equation

\begin{equation}
p\left( z\right) h^{2}\left( z\right) -q\left( z\right) h\left( z\right)
+r=0,  \label{eq alg}
\end{equation}%
where $p$ and $q$ are polynomials of degree $1,$ and $r\in 
%TCIMACRO{\U{2102} }%
%BeginExpansion
\mathbb{C}
%EndExpansion
^{\ast }$.

More precisely, we will investigate the existence of a compactly supported
positive measure whose Cauchy transform coincides with (a branch of) an
analytic continuation of a solution $h\left( z\right) $ of equation (\ref{eq
alg}) a.e. in $%
%TCIMACRO{\U{2102} }%
%BeginExpansion
\mathbb{C}
%EndExpansion
.$ If such a real measure exists and its support is a finite union of
compact semi-analytic curves and isolated points we will call it a \emph{%
real motherbody measure} of (\ref{eq alg}). Recall that the \emph{Cauchy
transform} $\mathcal{C}_{\mu }$ of a compactly supported finite
complex-valued Borel measure $\mu $ is the analytic function defined by 
\begin{equation*}
\mathcal{C}_{\mu }\left( z\right) =\int_{%
%TCIMACRO{\U{2102} }%
%BeginExpansion
\mathbb{C}
%EndExpansion
}\frac{d\mu \left( t\right) }{z-t},\quad z\in 
%TCIMACRO{\U{2102} }%
%BeginExpansion
\mathbb{C}
%EndExpansion
\backslash \text{supp}\left( \mu \right) .
\end{equation*}%
For instance, if $P$ is a polynomial of degree $n,$ then the Cauchy
transform $\mathcal{C}_{P}$ of its normalized root-counting measure $%
\displaystyle\frac{1}{n}\sum_{p\left( a\right) =0}\delta _{a}$ where $\delta
_{a}$ is the Dirac measure supported at $a,$ is given by the formula 
\begin{equation*}
\mathcal{C}_{p}\left( z\right) =\frac{P^{\prime }\left( z\right) }{nP\left(
z\right) }=\sum_{p\left( a\right) =0}\frac{1}{z-a}.
\end{equation*}%
The Cauchy transform $\mathcal{C}_{\mu }\left( z\right) $ satisfies the
properties 
\begin{equation*}
\mathcal{C}_{\mu }\left( z\right) \sim \frac{\mu \left( 
%TCIMACRO{\U{2102} }%
%BeginExpansion
\mathbb{C}
%EndExpansion
\right) }{z},\quad z\longrightarrow \infty ,\quad \mu =\frac{1}{\pi }\frac{%
\partial \mathcal{C}_{\mu }}{\partial \overline{z}}.
\end{equation*}%
\noindent A special case of equation (\ref{eq alg}) is 
\begin{equation}
zh^{2}\left( z\right) +\left( -z+A\right) h\left( z\right) +1=0,
\label{laguerre}
\end{equation}%
which appears in the study of the normalized root-counting measure $\mu
_{n}, $ 
\begin{equation*}
\mu _{n}=\mu \left( p_{n}\right) =\displaystyle\frac{\displaystyle%
\sum_{p_{n}\left( z\right) =0}\delta _{z}}{n}
\end{equation*}%
of the rescaled generalized Laguerre polynomials with varying parameters $nA$%
: 
\begin{equation*}
p_{n}\left( z\right) =L_{n}^{\alpha _{n}}\left( nz\right)
=\sum_{k=0}^{n}\left( 
\begin{array}{c}
n+nA \\ 
n-k%
\end{array}%
\right) \frac{\left( -z\right) ^{k}}{k!},
\end{equation*}%
with $A<-1$ in \cite{Kuijlaars}, and $A\notin 
%TCIMACRO{\U{211d} }%
%BeginExpansion
\mathbb{R}
%EndExpansion
$ in \cite{paper1}. It is shown in \cite{weak} that the Cauchy transform of
the weak limit $\mu $ of $\mu _{n}$ satisfies equation (\ref{laguerre}), and
the support of the measure $\mu $ consists of the trajectories of a certain
quadratic differential connecting the zeros $a,b=A+2\pm 2\sqrt{A+1}$ of the
discriminant of equation (\ref{laguerre}).

Solutions of equation (\ref{eq alg}) are given by

\begin{equation*}
h\left( z\right) =\frac{q\left( z\right) -\sqrt{D\left( z\right) }}{2p\left(
z\right) },
\end{equation*}%
with some branch cut of the square root of the discriminant 
\begin{equation*}
D\left( z\right) =q^{2}\left( z\right) -4rp\left( z\right) =A\left(
z-a\right) \left( z-b\right) ,A\in 
%TCIMACRO{\U{2102} }%
%BeginExpansion
\mathbb{C}
%EndExpansion
^{\ast },\left( a,b\right) \in 
%TCIMACRO{\U{2102} }%
%BeginExpansion
\mathbb{C}
%EndExpansion
^{2}.
\end{equation*}

It is obvious that with the choice of the square root of $D$ with condition 
\begin{equation*}
\sqrt{D\left( z\right) }\sim q\left( z\right) ,z\rightarrow \infty ,
\end{equation*}%
there exists $\alpha \in 
%TCIMACRO{\U{2102} }%
%BeginExpansion
\mathbb{C}
%EndExpansion
$ such that $h\left( z\right) \sim \displaystyle\frac{\alpha }{z}%
,z\rightarrow \infty .$

We begin our study by giving the following necessary conditions for the
existence of the real motherbody measure.

\begin{proposition}
If equation (\ref{eq alg}) admits a real motherbody measure $\mu $, then:

\begin{itemize}
\item any connected curve in the support of $\mu $ coincides with a
horizontal trajectory of the quadratic differential 
\begin{equation*}
\varpi =-\frac{D\left( z\right) }{p^{2}\left( z\right) }dz^{2}.
\end{equation*}

\item the support of $\mu $ should include both branching points of (\ref{eq
alg}) i.e. the zeros of $D.$
\end{itemize}
\end{proposition}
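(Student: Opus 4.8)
The plan is to establish both bullet points by exploiting the local structure of the Cauchy transform $\mathcal{C}_\mu$ across the support of $\mu$, using the two defining properties of the Cauchy transform recalled in the introduction, namely $\mathcal{C}_\mu(z)\sim \mu(\mathbb{C})/z$ at infinity and $\mu = \frac{1}{\pi}\,\partial\mathcal{C}_\mu/\partial\bar z$. Since $\mu$ is a real (signed) motherbody measure, its Cauchy transform $h$ coincides a.e. with a branch of the algebraic function solving \eqref{eq alg}, and $h$ is analytic off the support. The key algebraic fact I would extract first is that the two branches $h_\pm(z) = \big(q(z)\pm\sqrt{D(z)}\big)/\big(2p(z)\big)$ differ precisely by $h_+(z)-h_-(z) = \sqrt{D(z)}/p(z)$, so on a smooth arc of the support the jump of $\mathcal{C}_\mu$ across the curve is governed by $\sqrt{D}/p$.

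For the first bullet, I would argue as follows. Away from the support $\mu$ vanishes, so $\mathcal{C}_\mu$ is holomorphic there; along a smooth (semi-analytic) arc $\gamma$ of the support, the measure is absolutely continuous with respect to arclength, and the Sokhotski--Plemelj-type jump relation identifies the density with the boundary difference $\mathcal{C}_\mu^+-\mathcal{C}_\mu^-$ of the two analytic continuations meeting along $\gamma$. These two continuations must be the two branches $h_+,h_-$, so the jump equals $\sqrt{D(z)}/p(z)$ up to orientation. The crucial point is that $\mu$ is a \emph{real} measure: its density must be real relative to the arclength element, which forces $\frac{\sqrt{D(z)}}{p(z)}\,dz$ to be purely imaginary along $\gamma$, equivalently $\frac{D(z)}{p^2(z)}\,dz^2 < 0$ along $\gamma$. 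This is exactly the condition that $\gamma$ be a horizontal trajectory of $\varpi = -\frac{D(z)}{p^2(z)}\,dz^2$, since horizontal trajectories are defined by $\varpi>0$, i.e. $-\frac{D}{p^2}dz^2>0$. I would present the reality argument carefully, parametrizing $\gamma$ by arclength and writing the condition that $d\mu/ds$ is real.

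For the second bullet, I would show that the branch points $a,b$ (the zeros of $D$) cannot be avoided by the support. The function $h$ is an algebraic function with genuine branching at $a$ and $b$ (since $D$ has simple zeros there, $\sqrt{D}$ is locally two-valued), whereas the globally defined object $\mathcal{C}_\mu$ is single-valued and analytic on the complement of $\operatorname{supp}(\mu)$. If a branch point, say $a$, were not in the support, then $\mathcal{C}_\mu$ would be single-valued and holomorphic in a punctured neighborhood of $a$ and would agree there with a branch of $h$; but analytic continuation of $\mathcal{C}_\mu$ around a small loop encircling $a$ would return the other branch, contradicting single-valuedness. Hence each branch point must lie in the support, and by the first bullet the support consists of trajectory arcs emanating from these branch points (which are simple zeros of $D$, hence simple zeros of $\varpi$, where three trajectories emanate).

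I expect the main obstacle to be the rigorous justification of the jump (Plemelj) relation in the generality of \emph{semi-analytic} curves with possible endpoints and self-intersections, and the precise bookkeeping of which branch $h_+$ or $h_-$ is selected on each side of $\gamma$ consistently with the orientation and with the normalization $\mathcal{C}_\mu\sim\mu(\mathbb{C})/z$ at infinity; getting the sign conventions right so that the reality of $\mu$ translates cleanly into the horizontal-trajectory condition $-\frac{D}{p^2}dz^2>0$ is the delicate step, whereas the branch-point argument in the second bullet is comparatively robust once single-valuedness of $\mathcal{C}_\mu$ is invoked.
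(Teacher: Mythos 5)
Your argument is correct in substance, but note that the paper itself does not prove this proposition at all: its proof is the single line ``See e.g.~\cite{shapiro} or \cite{pritsker}.'' So the relevant comparison is with those cited references, and there your route is essentially the standard one: the Sokhotski--Plemelj jump relation identifying the density of $\mu$ on an arc of the support with the difference $h_+-h_-=\pm\sqrt{D}/p$ of the two branches; reality of $\mu$ forcing $\bigl(\sqrt{D}/p\bigr)\,dz\in i\mathbb{R}$ along the arc, i.e. $-D\,dz^{2}/p^{2}>0$, which is exactly the horizontal-trajectory condition; and single-valuedness of $\mathcal{C}_\mu$ against the monodromy of $h$ at a zero of $D$ for the second bullet. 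What your version buys is a self-contained proof where the paper offers only a citation; what the paper's choice buys is brevity, since the statement is a known lemma in the motherbody literature. Two refinements would tighten your write-up. First, you need not \emph{assume} absolute continuity of $\mu$ on the arcs: it follows from the distributional identity $\mu=\frac{1}{\pi}\,\partial\mathcal{C}_\mu/\partial\bar z$ recalled in the introduction, since $\mathcal{C}_\mu$ is holomorphic off the support and, on a smooth arc with boundary values from both sides, its $\bar z$-derivative is the jump times the line element; this removes the slight circularity of positing a density and then identifying it. Second, the branch-point argument has a cleaner form that avoids monodromy bookkeeping: if $a\notin\operatorname{supp}(\mu)$, then $\mathcal{C}_\mu$ is holomorphic in a full disc around $a$ and satisfies the quadratic equation there (a.e., hence everywhere by continuity), so $f=2p\,\mathcal{C}_\mu-q$ is holomorphic with $f^{2}=D$; since irreducibility of (\ref{eq alg}) forces $a\neq b$, the zero of $D$ at $a$ is simple, so $f^{2}$ would vanish to odd order at $a$, which is impossible. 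This formulation also makes explicit where irreducibility enters, a hypothesis your version uses only implicitly when asserting that the two branches are genuinely distinct near $a$.
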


\proof
See e.g.\cite{shapiro} or \cite{pritsker}.
\endproof

\bigskip

Proposition 1, connects the motherbody measure with horizontal trajectories
of a quadratic differential. Quadratic differentials appear in many areas of
mathematics and mathematical physics such as orthogonal polynomials, moduli
spaces of algebraic curves, univalent functions, asymptotic theory of linear
ordinary differential equations etc...

Let us discuss some properties of horizontal trajectories of the rational
quadratic differential $\varpi =-\displaystyle\frac{D\left( z\right) }{%
p^{2}\left( z\right) }dz^{2}$ on the Riemann sphere $\hat{%
%TCIMACRO{\U{2102}}%
%BeginExpansion
\mathbb{C}%
%EndExpansion
}.$

Zeros and simple poles of $-\displaystyle\frac{D\left( z\right) }{%
p^{2}\left( z\right) }dz^{2}$ are called \emph{finite critical points},
poles of order greater than two are called \emph{infinite critical points. }%
All other points are called \emph{regular points}.

The \emph{horizontal trajectories} (or just trajectories) of the quadratic
differential $\varpi $ are given by the equation 
\begin{equation}
\mathcal{\Re }\int^{z}\frac{\sqrt{D\left( t\right) }}{p\left( t\right) }%
\,dt\equiv const.  \label{re}
\end{equation}%
The \emph{vertical} or \emph{orthogonal} trajectories are obtained by
replacing $\Re $ by $\Im $ in the equation above.

The local structure of the trajectories is well known (see e.g.\cite%
{Strebel:84}~,\cite{MR0096806}, \cite{Pommerenke75}, \cite{MR1929066}). At
any regular point, the trajectory passing through this point is a close
analytic arc. Through every regular point of $\varpi $ passes uniquely
determined horizontal and vertical trajectories, which are orthogonal to
each other \cite[Theorem 5.5]{Strebel:84}. At a zero of multiplicity $r,$
there emanate $r+2$ trajectories under equal angles $\pi /\left( r+2\right) .
$ At a simple pole there emanates only one trajectory. At a double pole, the
local behaviour of the trajectories depends on the vanishing of the real or
imaginary part of the residue; they have either the radial, the circular or
the log-spiral form, see Figures \ref{Fig1},\ref{Fig2}.

\begin{figure}[h]
\begin{center}
\includegraphics[width=10cm,height=6cm]{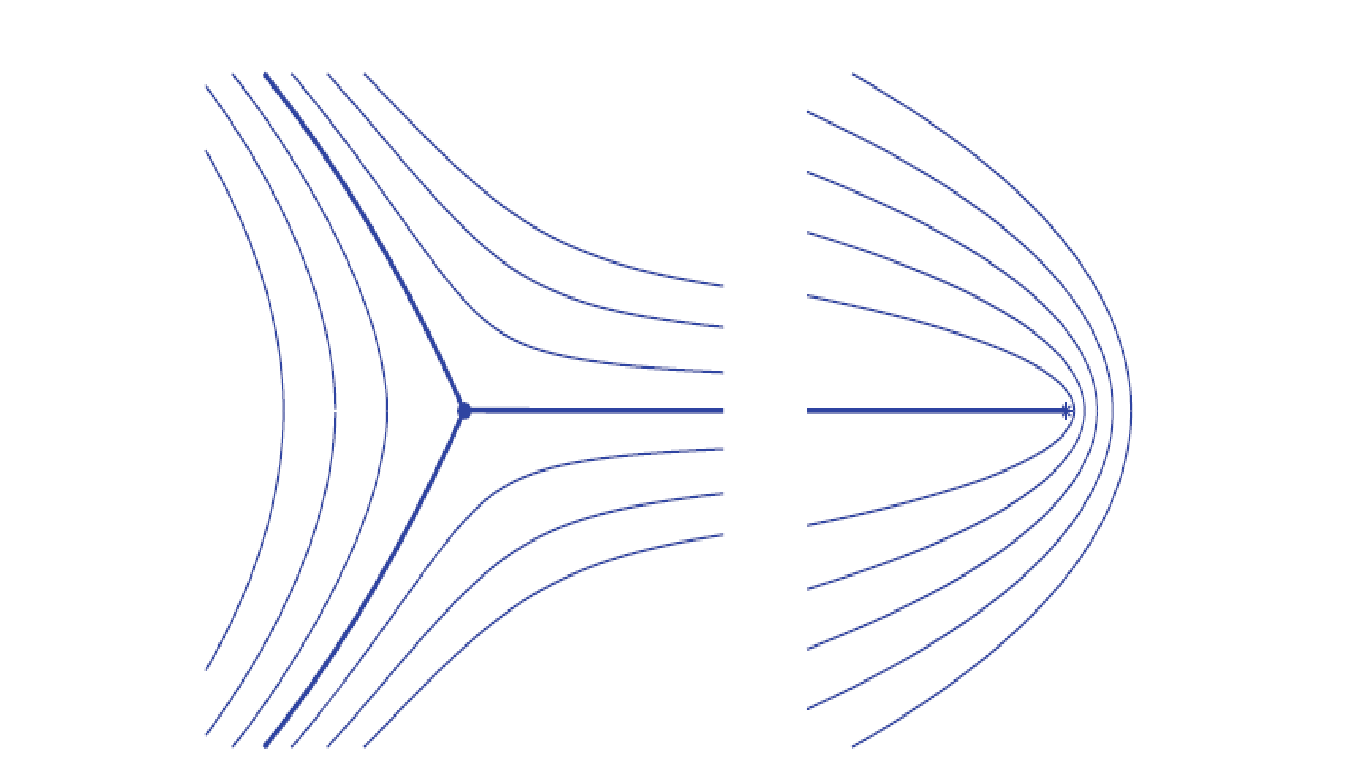}
\caption[le titre]{ The local trajectory structure near a simple zero (left) or a simple pole (right)
}
\label{Fig1}
\end{center}
\end{figure}
\begin{figure}[h]
\begin{center}
\includegraphics[width=10cm,height=6cm]{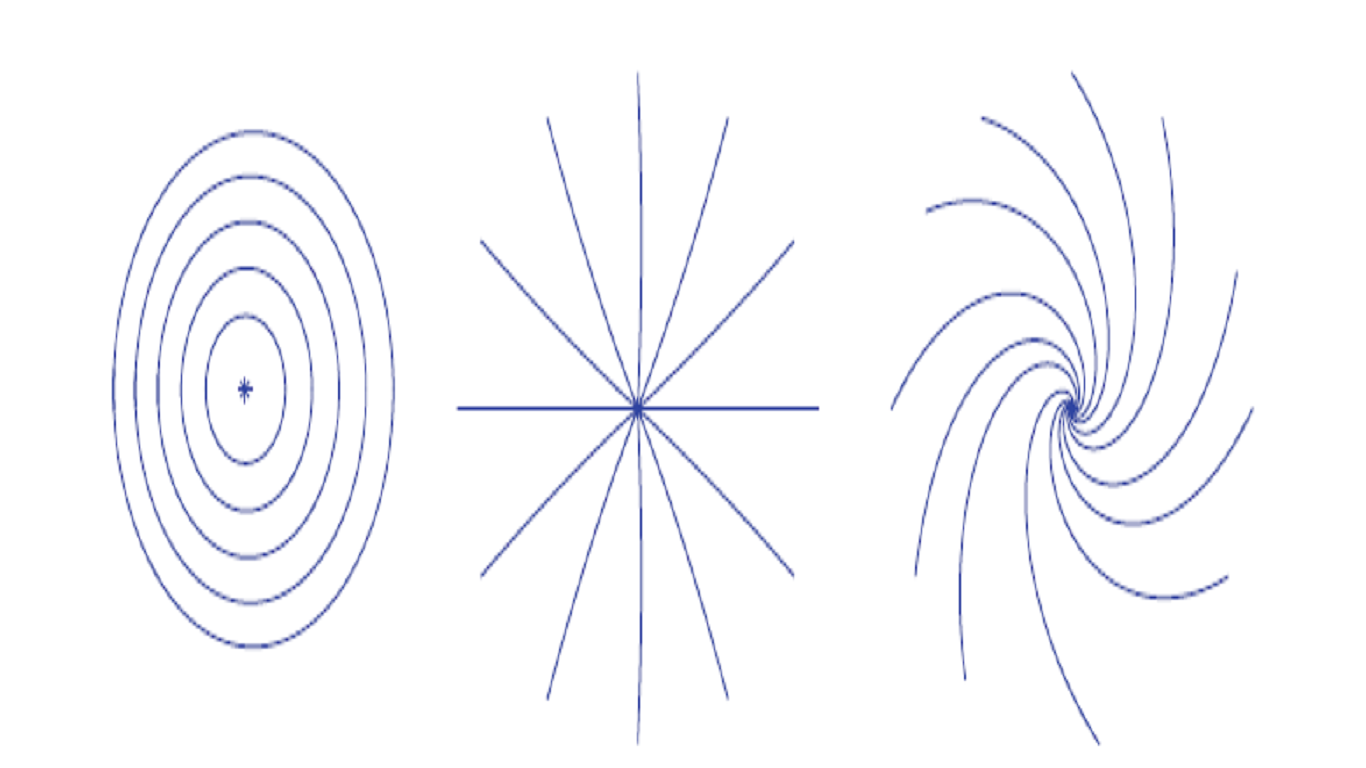}
\caption[le titre]{ The local behaviour of the trajectories near the origin, $ab > 0$ (left), $ab < 0$ (centre), and $ab \notin  \mathbb{R}$, (right).}
\label{Fig2}
\end{center}
\end{figure}
A trajectory of $\varpi $ starting and ending at finite critical points is
called \emph{finite critical} or \emph{short}. If it starts at a finite
critical point but tends either to the origin or to infinity, we call it 
\emph{an infinite critical trajectory} of $\varpi $.

The set of finite and infinite critical trajectories of $\varpi $ together
with their limit points (critical points of $\varpi $) is called the \emph{%
critical graph} of $\varpi $.

By a translation of the variable $z$ and the change of variable $\sqrt{A}%
z=y, $ we may assume, without loss of generality that, 
\begin{equation*}
\varpi =\varpi \left( z,a,b\right) =-\frac{\left( z-a\right) \left(
z-b\right) }{z^{2}}dz^{2},\ (a,b)\in 
%TCIMACRO{\U{2102} }%
%BeginExpansion
\mathbb{C}
%EndExpansion
^{2}-\{(0,0)\}.
\end{equation*}%
We start by observing that $\varpi $ has two zeros, $a$ and $b$, and, if $%
ab\neq 0,$ the origin is a double pole, with 
\begin{equation*}
\ \varpi =\left( -\frac{ab}{z^{2}}+\mathcal{O}(z^{-1})\right) dz^{2},\quad
z\rightarrow 0.
\end{equation*}%
Another pole of $\varpi $ is located at infinity and is of order 4. In fact,
with the parametrization $u=1/z$, we get 
\begin{equation*}
\ \varpi =\left( -\frac{1}{u^{4}}+\mathcal{O}(u^{-3})\right) du^{2},\quad
u\rightarrow 0.
\end{equation*}%
If $a=0$ or $b=0$, the origin is a simple pole.\newline

Regarding the behavior at infinity, we can assume that the imaginary (resp.
real) axis is the only asymptotic direction of the trajectories (resp.
orthogonal trajectories) of $\varpi $. In other words, there exists a
neighborhood of infinity $U$ such that every trajectory entering $U$ tends
to $\infty $ either in the $+i\infty $ or $-i\infty $ direction, and the two
rays of any trajectory which stays in $U$ tend to $\infty $ in the opposite
asymptotic directions (\cite[Theorem 7.4]{Strebel:84}).

Usually, the main troubles in the description of the global structure of the
trajectories of a quadratic differential come from the existence of the
so-called recurrent trajectories, whose closures may have a non-zero plane
Lebesgue measure. However, since $\varpi $ has only two poles ( $0$ and $%
\infty $ ), Jenkins' Three Pole Theorem asserts that it cannot have any
recurrent trajectory (see \cite[Theorem 15.2]{Strebel:84}).\bigskip

\begin{itemize}
\item If $a=b$ then $\varpi =-\frac{\left( z-a\right) ^{2}}{z^{2}}dz^{2},$
and then there are $4$ trajectories emanating from $a$ under equal angles $%
\pi /2$,

\begin{itemize}
\item if $a\in 
%TCIMACRO{\U{211d} }%
%BeginExpansion
\mathbb{R}
%EndExpansion
,$ two of them diverge to infinity parallel to the imaginary axis in
opposite directions; the two others, form a loop around the origin. In case $%
a=1$, we get the well-known Szeg\H{o} curve, see Figure \ref{Fig3}.
\begin{figure}[h]
\begin{center}
\includegraphics[width=10cm,height=6cm]{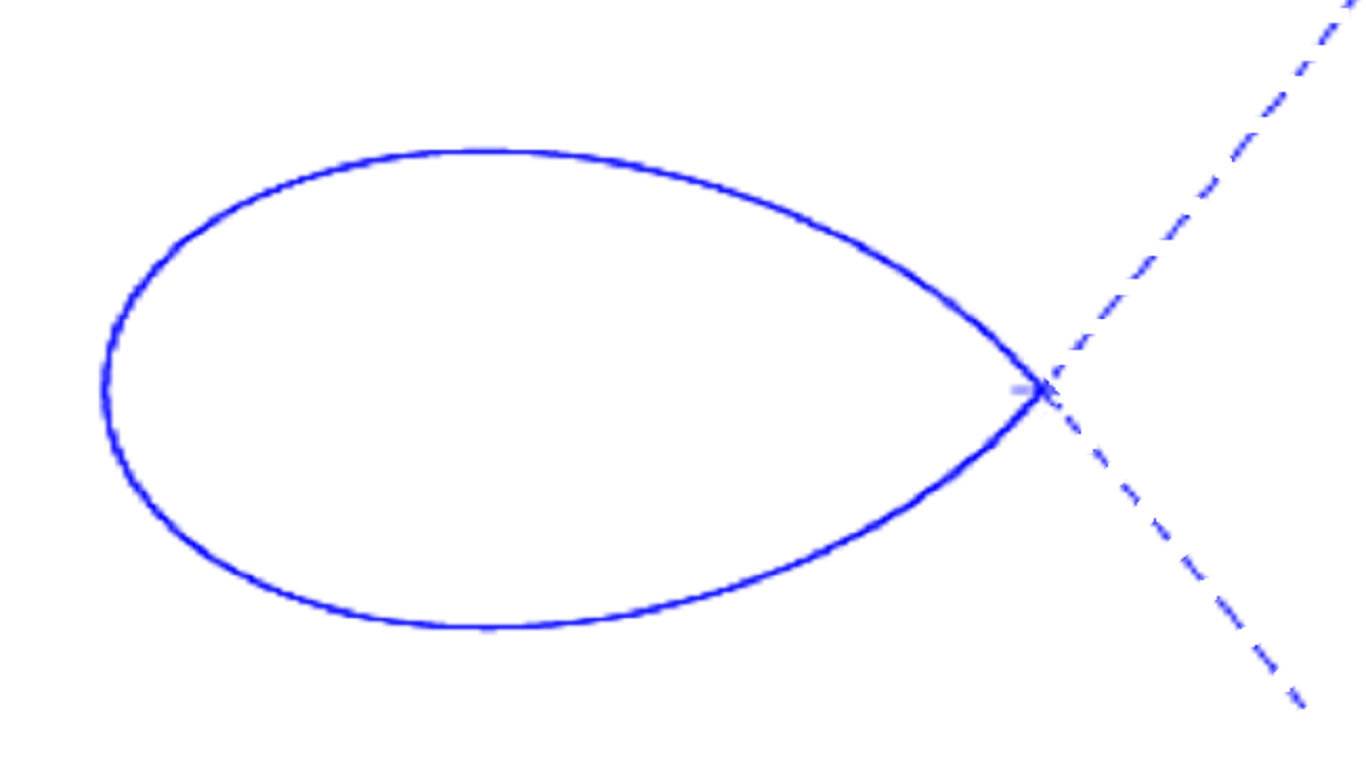}
\caption[le titre]{  Critical graph for the case $a=b=1$; and the Szeg\H{o} curve (solid line)
}
\label{Fig3}
\end{center}
\end{figure}
\item if $a\in i%
%TCIMACRO{\U{211d} }%
%BeginExpansion
\mathbb{R}
%EndExpansion
,$ then, then the critical graph is composed by one of the sets $\left\{
iy;y\in 
%TCIMACRO{\U{211d} }%
%BeginExpansion
\mathbb{R}
%EndExpansion
^{+}\right\} $ or $\left\{ iy;y\in 
%TCIMACRO{\U{211d} }%
%BeginExpansion
\mathbb{R}
%EndExpansion
^{-}\right\} $ that contains $a,$ and the two other trajectories diverge to
infinity and form with infinity a domain that contains $0$.

\item if $a\notin 
%TCIMACRO{\U{211d} }%
%BeginExpansion
\mathbb{R}
%EndExpansion
\cup i%
%TCIMACRO{\U{211d} }%
%BeginExpansion
\mathbb{R}
%EndExpansion
,$ then one spiral trajectory diverges to the origin in , two trajectories
diverge to infinity in the same direction and form with infinity a domain
that contains the spiral, the fourth trajectory diverges to infinity in the
other direction.
\end{itemize}

\item If $a=0$ and $b\neq 0,$ then $\varpi =-\frac{z-b}{z}dz^{2},$ and there
emanate $3$ trajectories from $b$ under equal angles $2\pi /3$, one of them
goes to the origin, the two others go to infinity parallel to the imaginary
axis and in the opposite directions, see Figure \ref{Fig4}.
\begin{figure}[h]
\begin{center}
\includegraphics[width=10cm,height=6cm]{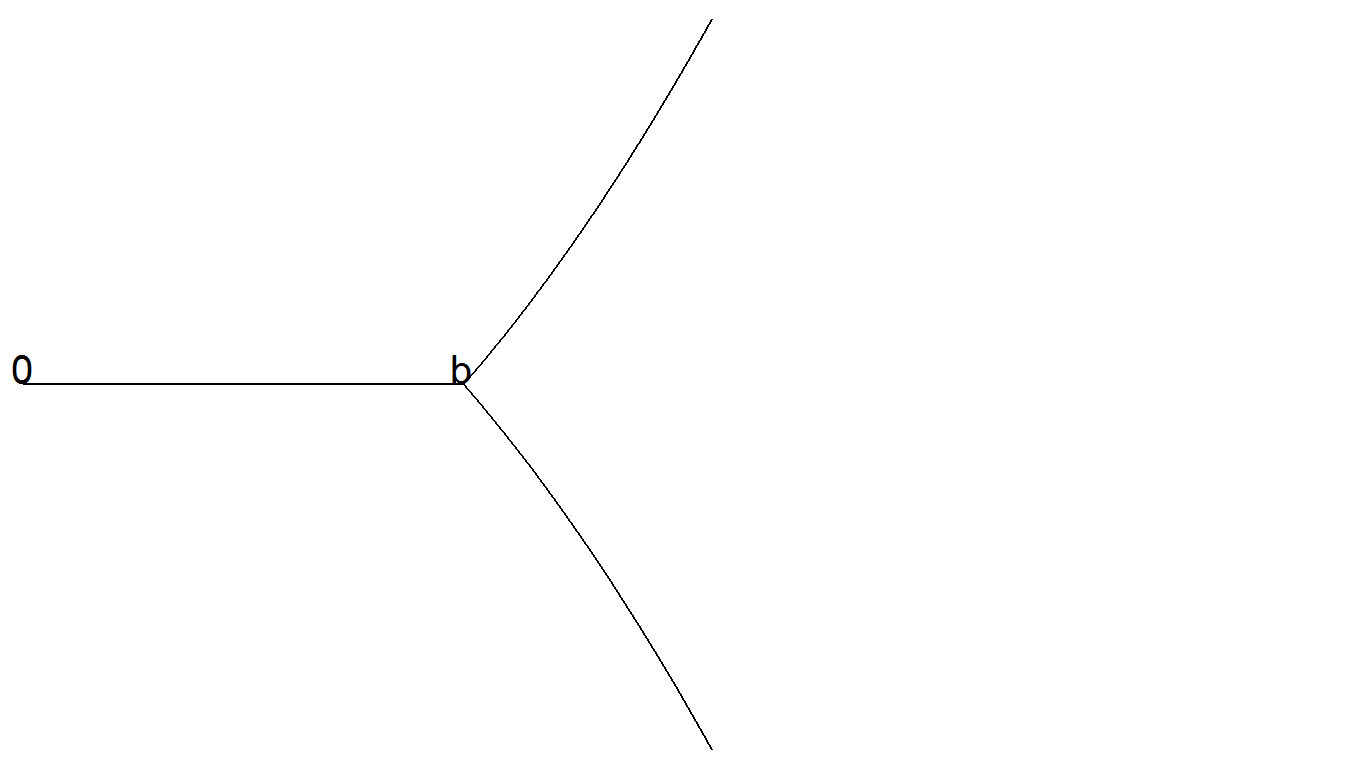}
\caption[le titre]{ Critical graph for the case $a = 0; b > 0.$}
\label{Fig4}
\end{center}
\end{figure}
\end{itemize}

In what follows, we investigate $a\neq b,$ and $ab\neq 0.$ In this case,
from each zero, $a$ and $b,$ there emanate $3$ trajectories under equal
angles $2\pi /3$. The local behavior of the trajectories near the origin
depends on the vanishing of the real or the imaginary part of the product $%
ab $.

\bigskip

The main result of this paper is the following.

\begin{proposition}
\label{main} Let $a$ and $b$ be two non vanishing complex numbers. There
exists a critical trajectory of the quadratic differential $\varpi \left(
z,a,b\right)$ if and only if $\left( \sqrt{a}+\sqrt{b}\right) ^{2}\in 
%TCIMACRO{\U{211d} }%
%BeginExpansion
\mathbb{R}
%EndExpansion
$ or $\left( \sqrt{a}-\sqrt{b}\right) ^{2}\in 
%TCIMACRO{\U{211d} }%
%BeginExpansion
\mathbb{R}
%EndExpansion
.$
\end{proposition}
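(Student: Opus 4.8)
The plan is to translate the existence of a critical trajectory into a condition on the periods of the one-form $\displaystyle \frac{\sqrt{(t-a)(t-b)}}{t}\,dt$, and then to evaluate those periods explicitly. First I would fix a branch of the square root and introduce the (locally defined, multivalued) primitive
\begin{equation*}
\Phi(z)=\int^{z}\frac{\sqrt{(t-a)(t-b)}}{t}\,dt ,
\end{equation*}
so that by \eqref{re} the horizontal trajectories are the level curves $\Re\Phi\equiv\mathrm{const}$. Since the integrand vanishes at the two zeros $a,b$, a \emph{short} (finite critical) trajectory joining $a$ and $b$ exists only if $a$ and $b$ lie on the same level curve, i.e.\ only if $\Re\int_{a}^{b}\frac{\sqrt{(t-a)(t-b)}}{t}\,dt=0$ for a path representing the trajectory. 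The essential point is that $\varpi$ has a double pole at the origin with nonzero residue, so the value of $\int_{a}^{b}$ depends on the homotopy class of the path in $\mathbb{C}\setminus\{0\}$: the two classes (passing on either side of $0$) give values differing by $2\pi i\,\operatorname{Res}_{z=0}\frac{\sqrt{(z-a)(z-b)}}{z}=2\pi i\sqrt{ab}$.

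Next I would compute the two period values. Working on the genus-zero Riemann surface $w^{2}=(z-a)(z-b)$, the differential $\frac{w}{z}\,dz$ has simple poles over $z=0$ with residues $\pm\sqrt{ab}$ and double poles over $z=\infty$ with residues $\mp\frac{a+b}{2}$ (equivalently, one integrates term by term after writing $\frac{\sqrt{(t-a)(t-b)}}{t}=\frac{t-(a+b)}{\sqrt{(t-a)(t-b)}}+\frac{ab}{t\sqrt{(t-a)(t-b)}}$ and using elementary antiderivatives). Deforming the cycle around the cut $[a,b]$ onto the remaining singularities and applying the residue theorem yields
\begin{equation*}
\int_{a}^{b}\frac{\sqrt{(t-a)(t-b)}}{t}\,dt=-\frac{i\pi}{2}\left(\sqrt{a}\mp\sqrt{b}\right)^{2},
\end{equation*}
the two signs corresponding to the two homotopy classes (their difference $-\frac{i\pi}{2}\cdot(\pm 4\sqrt{ab})=\mp 2\pi i\sqrt{ab}$ matching the residue jump above). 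Consequently $\Re\int_{a}^{b}=0$ for one class precisely when $\left(\sqrt{a}-\sqrt{b}\right)^{2}\in\mathbb{R}$, and for the other precisely when $\left(\sqrt{a}+\sqrt{b}\right)^{2}\in\mathbb{R}$, which is exactly the stated disjunction.

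For the equivalence itself I would argue both directions using the global structure recalled before the statement. For necessity, Jenkins' Three Pole Theorem guarantees there are no recurrent trajectories, so every critical trajectory is either short, a loop based at a zero, or tends to $0$ or to $\infty$; a short trajectory forces one of the two period conditions by the computation above, and I would check that a loop around $0$ (period $\pm 2\pi i\sqrt{ab}$) or a separatrix terminating at the double pole is governed by the same two quantities, so that in every case one of $\left(\sqrt{a}\pm\sqrt{b}\right)^{2}$ is real. For sufficiency, assuming say $\left(\sqrt{a}-\sqrt{b}\right)^{2}\in\mathbb{R}$, the points $a$ and $b$ lie on a common level curve $\Re\Phi\equiv 0$; using that at $\infty$ the only asymptotic directions are $\pm i\infty$, together with the local normal form at the double pole, I would show that the trajectory issuing from $a$ on that level can neither escape to $\infty$ nor spiral into $0$ while keeping $\Re\Phi$ finite, hence must terminate at the other zero $b$, producing the desired critical trajectory.

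The hard part will be this last, global, step: passing from the \emph{necessary} coincidence of levels $\Re\Phi(a)=\Re\Phi(b)$ to the \emph{actual} existence of a connecting arc. One must rule out that the separatrix from $a$ runs off to infinity or is captured by the origin, and must keep track of which homotopy class (which side of the pole at $0$) is realized for given $a,b$; this requires a careful case analysis of the critical graph according to whether $ab>0$, $ab<0$, or $ab\notin\mathbb{R}$ (the radial, circular, and spiral pictures of Figure \ref{Fig2}), combined with the no-recurrence and asymptotic-direction facts already established.
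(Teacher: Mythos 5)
Your first half is sound and coincides with the paper's Lemma \ref{real}: the residue computation at $z=0$ and $z=\infty$ giving the two period values $\pm\frac{i\pi}{2}\left(\sqrt{a}\pm\sqrt{b}\right)^{2}$ for the two homotopy classes in $\mathbb{C}^{\ast}$, and with it the easy (necessity) direction, exactly as in Case 3 of the paper's proof. But the sufficiency direction — which is the real content of the proposition — has a genuine gap, and the mechanism you propose for closing it does not work. You argue that the trajectory issuing from $a$ ``can neither escape to $\infty$ nor spiral into $0$ while keeping $\Re\Phi$ finite.'' This rules out nothing: $\Re\Phi$ is constant along \emph{every} horizontal trajectory by definition \eqref{re}, including those that diverge to infinity (they go to $\pm i\infty$ with a logarithmic drift $x\sim\tfrac{1}{2}\Re(a+b)\log|y|$, keeping $\Re\Phi$ constant) and those that spiral into the double pole when $ab\notin\mathbb{R}$ (near $0$ one has $\Phi\sim\sqrt{ab}\,\log z$, and along a log-spiral the term $-\Im\sqrt{ab}\,\arg z$ compensates $\Re\sqrt{ab}\,\log|z|$, again keeping $\Re\Phi$ constant). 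So the coincidence of levels $\Re\Phi(a)=\Re\Phi(b)$ by itself never forces a connecting arc. The paper closes precisely this gap with machinery you do not supply: the Teichm\"{u}ller Lemma \ref{teich} and its consequences (Corollaries \ref{2spirals2zeros}, \ref{1coroll teich}, \ref{2spirals1zero}, \ref{1spiral2spirals}) pin down the only possible critical graphs when no connecting trajectory exists, and then a contradiction is produced by building two non-homotopic paths from pieces of vertical trajectories and the arcs $\gamma_{1},\gamma_{2}$, so that the period conditions \eqref{M and N} violate Lemma \ref{real} in both homotopy classes simultaneously. You explicitly defer ``the hard part'' to exactly this missing analysis, so the proposal is a plan, not a proof.

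A secondary but genuine error: your claim that a loop around $0$ is ``governed by the same two quantities'' is false. The loop period is $\pm 2\pi i\sqrt{ab}$, whose real part vanishes if and only if $ab>0$, a strictly weaker condition than $\left(\sqrt{a}\pm\sqrt{b}\right)^{2}\in\mathbb{R}$ (take $a=2i$, $b=-i/2$: then $ab=1>0$ while $\left(\sqrt{a}\pm\sqrt{b}\right)^{2}=\pm 2+\tfrac{3i}{2}\notin\mathbb{R}$). In that situation a loop through one zero encircling the origin \emph{does} exist (this is the left picture in Figure \ref{Fig6}), even though the proposition asserts no critical trajectory; so the statement must be read as concerning trajectories joining the two distinct zeros $a$ and $b$, and loops cannot be folded into your period dichotomy. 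Had your necessity argument genuinely relied on loops satisfying one of the two stated conditions, it would be refuted by this example.
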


\begin{remark}
\begin{enumerate}
\item[(i)] Results of Proposition 2 hold in the case of the rescaled
generalized Laguerre polynomials with varying parameters $nA$. In this case
we have $a,b=A+2\pm 2\sqrt{A+1}$ and 
\begin{equation*}
\left( \sqrt{a}\pm \sqrt{b}\right) ^{2}=2A+4\pm 2\sqrt{((A+2)^{2}-4(A+1)}%
=2A+4\pm 2\sqrt{A^{2}}.
\end{equation*}%
In other words $\left( \sqrt{a}\pm \sqrt{b}\right) ^{2}$ equals either $4A+4$
or $4$.\newline

\item[(ii)] Given a complex number $a,$ we consider the set%
\begin{equation*}
\Gamma _{a}=\left\{ b\in 
%TCIMACRO{\U{2102} }%
%BeginExpansion
\mathbb{C}
%EndExpansion
\mid \left( \sqrt{a}+\sqrt{b}\right) ^{2}\in 
%TCIMACRO{\U{211d} }%
%BeginExpansion
\mathbb{R}
%EndExpansion
\text{ or }\left( \sqrt{a}-\sqrt{b}\right) ^{2}\in 
%TCIMACRO{\U{211d} }%
%BeginExpansion
\mathbb{R}
%EndExpansion
\right\} .
\end{equation*}%
Straightforward calculations show that if $a\notin 
%TCIMACRO{\U{211d} }%
%BeginExpansion
\mathbb{R}
%EndExpansion
,$ then $\Gamma _{a}=\mathcal{P}_{1}\cup \mathcal{P}_{2},$ where $\mathcal{P}%
_{1}$ and $\mathcal{P}_{2}$ are the parabolas defined by : 
\begin{eqnarray*}
\mathcal{P}_{1} & = &\left\{ (x,y) \in \mathbb{R}^{2}\mid \Re(a) +2\left( \frac{y-\Im\left( a\right) 
}{2\Im\left( \sqrt{a}\right) }\right) \Re\left( \sqrt{a}\right)
+\left( \frac{y-\Im\left( a\right) }{2\Im\left( \sqrt{a}\right) }
\right) ^{2}=x \right \} , \\
\mathcal{P}_{2} &=&\left\{ \left( x,y\right) \in \mathbb{R}^{2}\mid \Re\left( a\right) -2\left( \frac{y-\Im\left( a\right) 
}{2\Re\left( \sqrt{a}\right) }\right) \Re\left( \sqrt{a}\right)
-\left( \frac{y-\Im\left( a\right) }{2\Im\left( \sqrt{a}\right) }
\right) ^{2}=x\right\} .
\end{eqnarray*}
\end{enumerate}

If $a>0,$ then $\Gamma _{a}=%
%TCIMACRO{\U{211d} }%
%BeginExpansion
\mathbb{R}
%EndExpansion
^{+}\cup \left\{ \left( x,y\right) \in 
%TCIMACRO{\U{211d} }%
%BeginExpansion
\mathbb{R}
%EndExpansion
^{2}\mid x=a-\frac{y^{2}}{4a}\right\} .$

If $a<0,$ then $\Gamma _{a}=%
%TCIMACRO{\U{211d} }%
%BeginExpansion
\mathbb{R}
%EndExpansion
^{-}\cup \left\{ \left( x,y\right) \in 
%TCIMACRO{\U{211d} }%
%BeginExpansion
\mathbb{R}
%EndExpansion
^{2}\mid x=a-\frac{y^{2}}{4a}\right\} .$
\begin{figure}[h]
\begin{center}
\includegraphics[width=10cm,height=6cm]{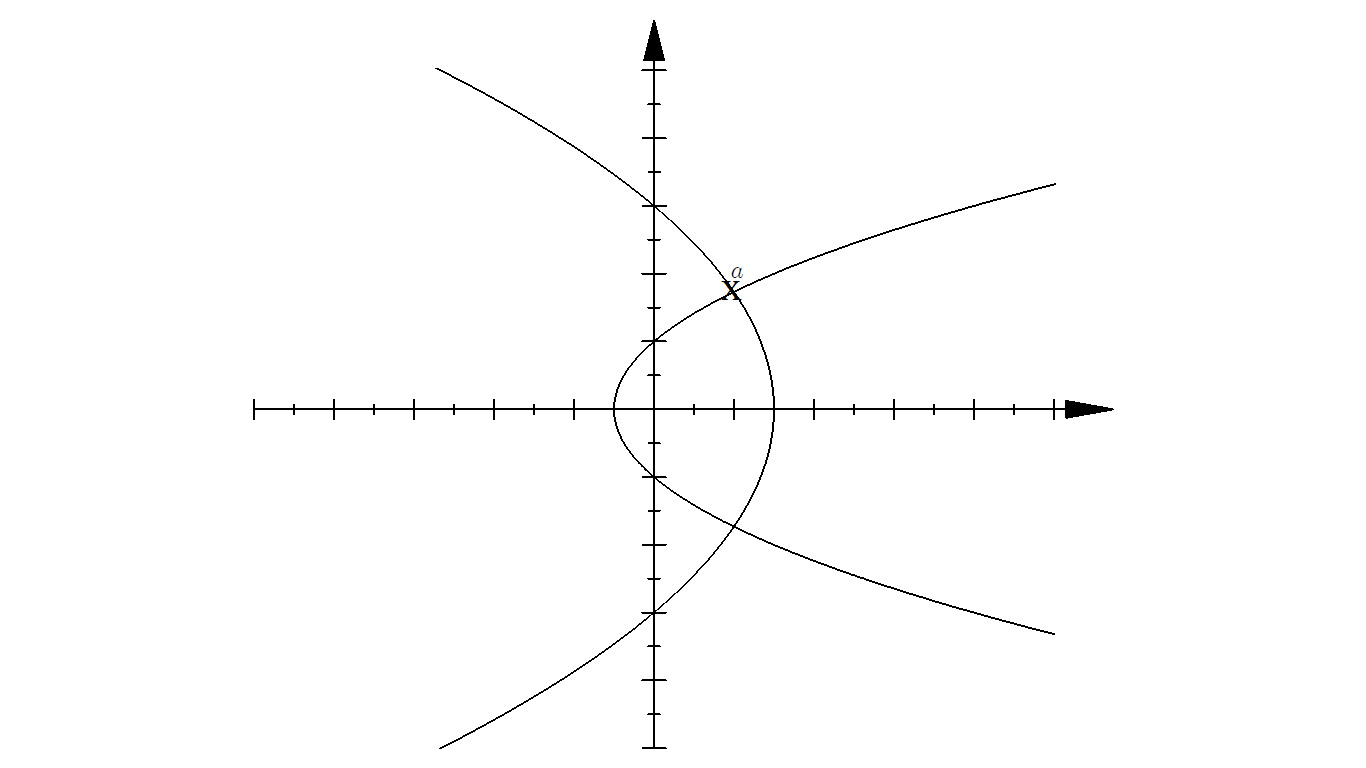}
\caption[le titre]{ Set  $\Gamma_a$  when $a\notin\mathbb{R}.$}
\label{Fig5}
\end{center}
\end{figure}
\end{remark}

\begin{acknowledgement}
\bigskip The authors are grateful to Professor Marco Bertola for useful
discussions, and they acknowledge the contribution of the anonymous referee
whose careful reading of the manuscript helped to improve the presentation.
\end{acknowledgement}

\section{Proof of Proposition 2.}

To prove Proposition 2. we need some lemmas. Below, given an oriented Jordan
curve $\Gamma $ joining $a$ and $b$ in $%
%TCIMACRO{\U{2102} }%
%BeginExpansion
\mathbb{C}
%EndExpansion
^{\ast }$, for $t\in \Gamma $, we denote by $\sqrt{D(t)}_{+}$ and $\sqrt{D(t)%
}_{-}$ the limits from the $+$-side and $-$-side respectively. (As usual,
the $+$-side of an oriented curve lies to the left, and the $-$-side lies to
the right, if one traverses the curve according to its orientation.)

\begin{lemma}
\label{real} For any curve $\gamma $ joining $a$ and $b$ and not passing
through $0,$ we have :%
\begin{equation*}
\int_{\gamma }\dfrac{\left( \sqrt{\ D\left( z\right) }\right) _{+}}{z}dz=\pm 
\frac{i\pi }{2}\left( \sqrt{a}\pm \sqrt{b}\right) ^{2},
\end{equation*}%
the signs $\pm $ depend on the homotopy class of $\gamma $ in $%
%TCIMACRO{\U{2102} }%
%BeginExpansion
\mathbb{C}
%EndExpansion
^{\ast },$ and the branch of the square root $\sqrt{\ D\left( z\right) }$
defined in $%
%TCIMACRO{\U{2102} }%
%BeginExpansion
\mathbb{C}
%EndExpansion
\backslash \gamma $ is chosen so that $\sqrt{\ D\left( z\right) }\sim
z,z\rightarrow \infty .$
\end{lemma}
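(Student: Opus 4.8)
The plan is to recognize the left-hand side as (half of) a contour integral of the single-valued function $g(z)=\sqrt{D(z)}/z$ around the cut $\gamma$, and then to evaluate that contour integral by pushing it out to infinity and collecting the residue at the simple pole $z=0$. Fix the branch of $\sqrt{D}$ on $\mathbb{C}\setminus\gamma$ by $\sqrt{D(z)}\sim z$ as $z\to\infty$; since $a$ and $b$ are the two branch points of $\sqrt{D}$ and they are the endpoints of $\gamma$, crossing $\gamma$ flips the sign of the square root, so the boundary values satisfy $(\sqrt{D})_+ = -(\sqrt{D})_-$ along $\gamma$, i.e. $g_+ = -g_-$.

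First I would introduce a positively oriented closed contour $\Gamma_\varepsilon$ that hugs $\gamma$, running along the $+$-side from a neighbourhood of $a$ to a neighbourhood of $b$, around $b$, back along the $-$-side, and around $a$. Because $g$ has only square-root (hence integrable) singularities at $a$ and $b$, the two small circular arcs contribute nothing as $\varepsilon\to 0$, and the jump relation gives
\[
\oint_{\Gamma_\varepsilon} g(z)\,dz \;=\; \int_\gamma\big(g_+-g_-\big)\,dz \;=\; 2\int_\gamma \frac{(\sqrt{D})_+}{z}\,dz ,
\]
up to the overall orientation sign, which I would track explicitly.

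Next I would deform $\Gamma_\varepsilon$ outward to a large circle $C_R=\{|z|=R\}$. Since $\gamma$ is a bounded arc avoiding $0$, the only singularity of $g$ lying between $\Gamma_\varepsilon$ and $C_R$ is the simple pole at the origin, so the residue theorem yields $\oint_{\Gamma_\varepsilon} g = \oint_{C_R} g - 2\pi i\,\operatorname{Res}_{z=0}g$. The residue is $\operatorname{Res}_{z=0} g = \sqrt{D(0)} = \pm\sqrt{ab}$, and here is where the homotopy class of $\gamma$ enters: which of the two values $\pm\sqrt{ab}$ occurs is determined by the side of the cut on which $0$ lies, equivalently by the winding of $\gamma$ about the origin. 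For the circle integral I would use $\sqrt{D(z)} = z - \tfrac{a+b}{2} + O(z^{-1})$, so that $g(z) = 1 - \tfrac{a+b}{2z} + O(z^{-2})$; the constant term integrates to zero over the closed curve and the $O(z^{-2})$ tail vanishes as $R\to\infty$, leaving $\oint_{C_R} g\,dz = -\pi i (a+b)$.

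Combining the three computations gives $2\int_\gamma (\sqrt{D})_+/z\,dz = -\pi i(a+b) \mp 2\pi i\sqrt{ab}$, hence
\[
\int_\gamma \frac{(\sqrt{D})_+}{z}\,dz \;=\; -\frac{i\pi}{2}\big(a + b \pm 2\sqrt{ab}\big) \;=\; -\frac{i\pi}{2}\big(\sqrt{a}\pm\sqrt{b}\big)^2 ,
\]
which is the claimed formula; the remaining overall sign $\pm$ is produced by the orientation of $\gamma$ (equivalently, by interchanging the two sides), and the inner sign by the branch/homotopy choice just discussed. The part I expect to be delicate is the bookkeeping of signs: matching the orientation of $\Gamma_\varepsilon$ and the labelling of the $\pm$-sides to the sign in the jump relation, and correctly correlating the sign of the residue $\sqrt{D(0)}=\pm\sqrt{ab}$ with the homotopy class of $\gamma$ in $\mathbb{C}^\ast$. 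Everything else — vanishing of the small arcs, single-valuedness of $g$ on $C_R$ (both branch points are encircled, so the monodromy is trivial), and the asymptotic expansion at infinity — is routine.
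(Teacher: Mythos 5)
Your proposal is correct and is essentially the paper's own proof: both use the jump relation $(\sqrt{D})_{+}=-(\sqrt{D})_{-}$ to turn $2I$ into a closed contour integral around the cut $\gamma$, then deform the contour and collect the contribution of the simple pole at $z=0$ together with the behaviour at infinity. The only cosmetic difference is that you evaluate the large-circle integral directly from the expansion $\sqrt{D(z)}/z=1-\tfrac{a+b}{2z}+\mathcal{O}(z^{-2})$, whereas the paper phrases the same computation as the residue at $z=\infty$; your explicit tracking of which value $\pm\sqrt{ab}$ the residue at $0$ takes is, if anything, slightly more careful than the paper's.
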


\begin{proof}
With the above choices, consider $I=\int_{\gamma }\frac{\left( \sqrt{\
D\left( z\right) }\right) _{+}}{z}dz.$ Since $\left( \frac{\sqrt{\ D\left(
t\right) }}{t}\right) _{+}=-\left( \frac{\sqrt{\ D\left( t\right) }}{t}%
\right) _{-}$ for $t\in \gamma ,$ we have 
\begin{equation*}
2I=\int_{\gamma }\left[ \left( \frac{\sqrt{\ D\left( t\right) }}{t}\right)
_{+}-\left( \dfrac{\sqrt{\ D\left( t\right) }}{t}\right) _{-}\right]
dt=\oint_{\Gamma }\dfrac{\sqrt{\ D\left( z\right) }}{z}dz,
\end{equation*}%
where $\Gamma $ is a closed contour encircling the curve $\gamma $ once in
the clockwise direction and not encircling $z=0.$ After a contour
deformation we pick up residues at $z=0$ and at $z=\infty $ for the
calculation of $I,$ namely :%
\begin{equation*}
2I=\pm 2i\pi \left( \underset{z=0}{Res}\left( \dfrac{\sqrt{\ D\left(
z\right) }}{z}\right) +\underset{z=\infty }{Res}\left( \dfrac{\sqrt{\
D\left( z\right) }}{z}\right) \right) .
\end{equation*}%
Clearly, 
\begin{equation*}
\underset{z=0}{Res}\left( \dfrac{\sqrt{\ D\left( z\right) }}{z}\right) =%
\sqrt{\ D\left( 0\right) }=\sqrt{\ ab}.
\end{equation*}%
The residue at $\infty $ is the opposite of the coefficient of $\frac{1}{z}$
in the Laurent serie of $\dfrac{\sqrt{\ D\left( z\right) }}{z}.$ Since $%
\frac{\sqrt{D\left( z\right) }}{z}\sim 1,z\rightarrow \infty ,$ we have%
\begin{equation*}
\dfrac{\sqrt{\ D\left( z\right) }}{z}=1-\frac{a+b}{2}\frac{1}{z}+\mathcal{O}%
\left( \frac{1}{z^{2}}\right) ;
\end{equation*}%
and therefore 
\begin{equation*}
\underset{z=\infty }{Res}\left( \dfrac{\sqrt{\ D\left( z\right) }}{z}\right)
=\dfrac{a+b}{2}.
\end{equation*}
\end{proof}

As an immediate consequence of Lemma \ref{real} we get

\begin{corollary}
\label{2spirals2zeros} If $\left( \sqrt{a}+\sqrt{b}\right) ^{2}\in 
%TCIMACRO{\U{211d} }%
%BeginExpansion
\mathbb{R}
%EndExpansion
$ or $\left( \sqrt{a}-\sqrt{b}\right) ^{2}\in 
%TCIMACRO{\U{211d} }%
%BeginExpansion
\mathbb{R}
%EndExpansion
,$ then, there cannot exist two horizontal trajectories emanating from $a$
and $b$ and diverging simultaneously to the origin.

Alternatively, if $\left( \sqrt{a}+\sqrt{b}\right) ^{2}\in i%
%TCIMACRO{\U{211d} }%
%BeginExpansion
\mathbb{R}
%EndExpansion
$ or $\left( \sqrt{a}-\sqrt{b}\right) ^{2}\in i%
%TCIMACRO{\U{211d} }%
%BeginExpansion
\mathbb{R}
%EndExpansion
,$ then, there cannot exist two vertical trajectories emanating from $a$ and 
$b$ and diverging simultaneously to the origin.
\end{corollary}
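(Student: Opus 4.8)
The plan is to argue by contradiction, turning the hypothetical pair of trajectories into a single arc joining $a$ and $b$ to which Lemma \ref{real} applies, and then reading off an impossibility from the real part of the resulting identity. So suppose $\gamma_a$ and $\gamma_b$ are horizontal trajectories, emanating from $a$ and from $b$ respectively, both tending to the origin; orient each of them from the zero towards $0$. Fix a small $\varepsilon>0$, let $z_a$ and $z_b$ be the last points where $\gamma_a$ and $\gamma_b$ meet the circle $\{|z|=\varepsilon\}$, and form the Jordan arc $\gamma_\varepsilon$ running from $a$ along $\gamma_a$ to $z_a$, then along one of the two arcs of $\{|z|=\varepsilon\}$ joining $z_a$ to $z_b$, then backwards along $\gamma_b$ from $z_b$ to $b$. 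By routing the connecting arc on the appropriate side of the origin, Lemma \ref{real} realizes whichever of the two values the hypothesis places in $\mathbb{R}$:
\begin{equation*}
\int_{\gamma_\varepsilon}\frac{\left(\sqrt{D(z)}\right)_{+}}{z}\,dz=\pm\frac{i\pi}{2}\left(\sqrt{a}\pm\sqrt{b}\right)^{2}\in i\mathbb{R}.
\end{equation*}

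First I would take real parts. By the defining property \eqref{re} of horizontal trajectories, the differential $\frac{\sqrt{D}}{z}\,dz$ is purely imaginary along $\gamma_a$ and along $\gamma_b$ (its real part is constant along each, and vanishes at the zero at which the trajectory originates), and this is insensitive to the choice of sign of the branch; hence the two trajectory portions of $\gamma_\varepsilon$ contribute nothing to $\Re\int_{\gamma_\varepsilon}$. Only the circular arc survives, and since $\frac{\sqrt{D(z)}}{z}\to\frac{\sqrt{ab}}{z}$ as $z\to0$, letting $\varepsilon\to0$ gives
\begin{equation*}
\Re\int_{\gamma_\varepsilon}\frac{\sqrt{D(z)}}{z}\,dz\longrightarrow\Re\!\left(i\,\Delta\theta\,\sqrt{ab}\right)=-\Delta\theta\,\Im\sqrt{ab},
\end{equation*}
where $\Delta\theta$ is the angle subtended at $0$ by the two approach directions. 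Because the right-hand side of the Lemma identity is purely imaginary, this limit must vanish, so $\Delta\theta\,\Im\sqrt{ab}=0$.

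Now I would conclude by cases. If $ab>0$ then $\sqrt{ab}\in\mathbb{R}$ and the origin is a center of $\varpi$ whose nearby horizontal trajectories are closed loops encircling $0$; in particular no trajectory can tend to $0$, and the configuration is already impossible. In the remaining cases $ab<0$ or $ab\notin\mathbb{R}$ one has $\Im\sqrt{ab}\neq0$, whence $\Delta\theta=0$; but $\Delta\theta=0$ means $\gamma_a$ and $\gamma_b$ approach the origin along the same direction (the same ray when $ab<0$, the same logarithmic spiral when $ab\notin\mathbb{R}$), forcing $\gamma_a=\gamma_b$ and contradicting that they issue from the distinct zeros $a\neq b$. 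This proves the horizontal statement, and the vertical statement follows verbatim after replacing $\varpi$ by $-\varpi$, equivalently $\Re$ by $\Im$ throughout, which turns the condition $\left(\sqrt{a}\pm\sqrt{b}\right)^{2}\in\mathbb{R}$ into $\left(\sqrt{a}\pm\sqrt{b}\right)^{2}\in i\mathbb{R}$.

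The delicate point, and the one I would write out carefully, is the passage $\varepsilon\to0$ near the double pole: along a trajectory tending to $0$ the integral $\int\frac{\sqrt{D}}{z}\,dz$ has a divergent imaginary part, so one must verify that only its convergent real part enters, and that the subtended angle $\Delta\theta(\varepsilon)$ stays bounded with a definite limit. This holds because near the origin both trajectories obey the same local model $\sqrt{ab}\log z$ and are therefore asymptotically parallel rays (for $ab<0$) or co-rotating spirals (for $ab\notin\mathbb{R}$), so their angular separation on $\{|z|=\varepsilon\}$ tends to a finite value. A secondary bookkeeping issue is matching the branch realizing $\left(\sqrt{D}\right)_{+}$ in Lemma \ref{real} and the homotopy class of $\gamma_\varepsilon$ to the correct choice among $\left(\sqrt{a}\pm\sqrt{b}\right)^{2}$; since only the vanishing of the real part is used, the residual sign ambiguity is harmless.
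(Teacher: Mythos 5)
Your strategy is genuinely different from the paper's, and its skeleton is sound: both arguments rest on Lemma \ref{real} plus the observation that horizontal trajectory arcs contribute nothing to $\Re\int\frac{\sqrt{D}}{z}\,dz$. But where the paper uses an orthogonal trajectory diverging to the origin (which meets both spirals infinitely often) to build \emph{two} non-homotopic paths from $a$ to $b$, each with nonzero real part --- contradicting the lemma no matter which of $\left(\sqrt{a}\pm\sqrt{b}\right)^{2}$ is real, and with no limiting process whatsoever --- you build a single path per $\varepsilon$ closed up by a small circular arc and let $\varepsilon\to 0$. Your reduction to the relation $\Delta\theta(\varepsilon)\,\Im\sqrt{ab}\to 0$ is essentially correct: the error on the arc is $O(\varepsilon)$ since $\frac{\sqrt{D(z)}}{z}-\frac{\sqrt{ab}}{z}$ is bounded near $0$, and for each $\varepsilon$ one of the two available arcs puts $\gamma_\varepsilon$ in the homotopy class realizing the purely imaginary value (adjacent classes differ by the residue contribution $\pm 2\pi i\sqrt{ab}$, which toggles the inner sign). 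Your disposal of the case $ab>0$ is also fine.

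The genuine gap is the final step: from $\Delta\theta(\varepsilon)\to 0$ you conclude that $\gamma_a$ and $\gamma_b$ approach the origin "along the same direction, forcing $\gamma_a=\gamma_b$". Note first that $\Delta\theta$ is the angle of the arc you chose for homotopy reasons, so what you actually get is that the angular separation of the hitting points tends to $0$ only modulo $2\pi$. More seriously, "asymptotic to the same ray or spiral" does not by itself imply coincidence of trajectories: integral curves of a foliation can be mutually asymptotic without being equal, so this implication is precisely the crux and cannot be asserted. What rescues the argument is the local rigidity at the double pole: in the natural parameter $\zeta=\int\frac{\sqrt{D}}{z}\,dz\sim\sqrt{ab}\,\log z$, the trajectories near $0$ are the curves $\Re\zeta=\mathrm{const}$, and the monodromy around $0$ shifts $\Re\zeta$ by exactly $\Re\left(2\pi i\sqrt{ab}\right)=-2\pi\,\Im\sqrt{ab}\neq 0$; consequently two trajectories that are distinct in the punctured plane have angular separations on $\{|z|=\varepsilon\}$ converging to a limit that is nonzero mod $2\pi$, and only then does $\Delta\theta(\varepsilon)\to 0$ force $\gamma_a=\gamma_b$, which contradicts that a single trajectory cannot have one end at $a$, one end at $b$, and also an end spiralling into $0$. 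You need to supply this computation (or an explicit appeal to the structure theorem for trajectories near a double pole); as written, the decisive step of your proof is claimed rather than proved. The paper's route avoids this entirely, at the cost of invoking the infinite intersection pattern of an orthogonal trajectory with the two spirals.
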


\begin{proof}
Assume that $ab\notin 
%TCIMACRO{\U{211d} }%
%BeginExpansion
\mathbb{R}
%EndExpansion
,$ and let $\gamma _{a}$ and $\gamma _{b}$ be two trajectories that diverge
in spirals to the origin. Let $\sigma $ be an orthogonal trajectory that
diverges to the origin. Then $\sigma $ intersects $\gamma _{a}$ and $\gamma
_{b}$ infinitely many times. Considering three consecutive points of
intersection, it is obvious that we can construct two paths $\gamma ,\gamma
^{\prime }$ joining $a$ and $b$ and not homotopic in $%
%TCIMACRO{\U{2102} }%
%BeginExpansion
\mathbb{C}
%EndExpansion
^{\ast },$ formed by the three pieces, from $\gamma _{a},\sigma $ and $%
\gamma _{b}.$ Then we get 
\begin{equation*}
\Re \int_{\gamma }\dfrac{\left( \sqrt{\ D\left( z\right) }\right) _{+}}{z}%
dz\neq 0,\text{ and }\Re \int_{\gamma ^{\prime }}\dfrac{\left( \sqrt{\
D\left( z\right) }\right) _{+}}{z}dz\neq 0,
\end{equation*}%
which contradicts Lemma \ref{real}.

If $ab<0,$ the loop formed by vertical trajectories passes through only one
zero, either $a$ or $b;$ we can repeat the same proof as in the previous
case.
\end{proof}

\begin{definition}
\bigskip A domain in $%
%TCIMACRO{\U{2102} }%
%BeginExpansion
\mathbb{C}
%EndExpansion
$ bounded only by segments of horizontal and/or vertical trajectories of $%
\varpi $ (and their endpoints) is called $\varpi $-polygon.
\end{definition}

We can use the Teichm\"{u}ller lemma (see [4, Theorem 14.1]) to clarify some
facts about the global strucure of the trajectories.

\begin{lemma}[Teichmuller]
\label{teich} Let $\Omega $ be a $\varpi $-polygon, and let $z_{j}$ be the
singular points of $\varpi $ on the boundary $\partial \Omega $ of $\Omega ,$
with multiplicities $n_{j},$ and let $\theta _{j}$ be the corresponding
interior angles with vertices at $z_{j},$ respectively. Let 
\begin{equation*}
\beta _{j}=1-\theta _{j}\dfrac{n_{j}+2}{2\pi }.
\end{equation*}%
Then%
\begin{equation*}
\sum \beta _{j}=\left\{ 
\begin{array}{c}
0,\text{if }0\in \Omega , \\ 
\\ 
2,\text{if }0\notin \Omega .%
\end{array}%
\right.
\end{equation*}
\end{lemma}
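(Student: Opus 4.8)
\section*{Proof proposal for the Teichm\"uller Lemma (Lemma \ref{teich})}

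The plan is to pass to the natural parameter of $\varpi$ and to read the identity off as a Gauss--Bonnet relation for the flat metric $\left|\varpi\right|$ on the disk $\Omega$. Recall that away from the critical points the locally defined map $w(z)=\int^{z}\sqrt{\varpi}$ is a local isometry between $\left(\Omega,\left|\varpi\right|\right)$ and the Euclidean plane which carries horizontal trajectories to horizontal segments and vertical trajectories to vertical segments (see \cite[\S 5--6]{Strebel:84}). Since every side of the $\varpi$-polygon $\Omega$ is an arc of a horizontal or a vertical trajectory, $\partial\Omega$ is mapped to a rectilinear Euclidean polygon whose sides are parallel to the coordinate axes; in particular each side is a geodesic of $\left|\varpi\right|$, so the geodesic-curvature integral along $\partial\Omega$ vanishes and all of the boundary's turning is concentrated at the vertices.

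Next I would compute the interior angle of the image polygon at each boundary vertex $z_{j}$. Near a critical point of multiplicity $n_{j}$ one has $\varpi\sim c\,(z-z_{j})^{n_{j}}dz^{2}$, hence $w-w(z_{j})\sim c'\,(z-z_{j})^{(n_{j}+2)/2}$, so that the conformal map $z\mapsto w$ multiplies angles at $z_{j}$ by the factor $(n_{j}+2)/2$ (at a regular corner $n_{j}=0$ and the factor is $1$). Consequently the interior angle of the image polygon at the vertex coming from $z_{j}$ is $\theta_{j}(n_{j}+2)/2$, and the associated exterior angle is $\pi-\theta_{j}(n_{j}+2)/2=\pi\beta_{j}$, which is exactly $\pi$ times the quantity $\beta_{j}$ in the statement. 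As a sanity check, a genuine rectangle in the $w$-plane (four right-angle corners at regular points, $0\notin\Omega$) gives $\beta_{j}=1/2$ at each corner and $\sum\beta_{j}=2$.

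The identity then follows from Gauss--Bonnet applied to the flat surface-with-conical-singularities $\left(\Omega,\left|\varpi\right|\right)$, a topological disk of Euler characteristic $1$. The ambient curvature vanishes and the sides are geodesic, so the only contributions come from the exterior angles at the vertices and from the curvature concentrated at the critical points interior to $\Omega$: a critical point of multiplicity $n$ carries cone angle $\pi(n+2)$ and hence total curvature $2\pi-\pi(n+2)=-\pi n$. Gauss--Bonnet therefore reads
\[
\sum_{j}\pi\beta_{j}\;-\;\pi\!\!\sum_{z_{i}\in\mathring{\Omega}}\!\! n_{i}\;=\;2\pi,
\qquad\text{that is,}\qquad
\sum_{j}\beta_{j}=2+\!\!\sum_{z_{i}\in\mathring{\Omega}}\!\! n_{i}.
\]
Finally I would identify the interior critical points. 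The finite critical points of $\varpi$ are the simple zeros $a,b$ and the double pole at $0$; the zeros $a$ and $b$ enter a $\varpi$-polygon only as boundary vertices, so the sole critical point that can lie in the interior of $\Omega$ is the origin, of multiplicity $n=-2$. When $0\notin\Omega$ the interior sum is empty and $\sum_{j}\beta_{j}=2$; when $0\in\Omega$ the single interior term contributes $n=-2$, giving $\sum_{j}\beta_{j}=2-2=0$, as claimed.

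The delicate point is the double pole itself: its cone angle $\pi(-2+2)=0$ makes $\left|\varpi\right|$ degenerate near $0$ (a half-infinite cylindrical end rather than an honest cone), so the curvature bookkeeping at the origin must be justified rather than merely quoted. I would handle this by excising a small $\left|\varpi\right|$-geodesic circle around $0$, applying Gauss--Bonnet on the genuinely flat annular region that remains, and verifying that the excised boundary circle contributes total turning $2\pi$ in the limit; equivalently, one may recast the whole computation as the argument principle for $\sqrt{\varpi}$ along $\partial\Omega$, where the double pole supplies precisely the term producing the dichotomy $2$ versus $0$ (this phrasing also meshes with the residue computation already used in Lemma \ref{real}). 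Two remaining routine checks are that $\partial\Omega$ is a simple closed curve traversed with winding number $+1$, so the right-hand side is $2\pi\chi=2\pi$, and that no finite critical point other than $0$ can be interior to a $\varpi$-polygon.
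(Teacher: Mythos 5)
Your proposal addresses a statement the paper never actually proves: the authors quote Teichm\"uller's lemma directly from Strebel \cite[Theorem 14.1]{Strebel:84}, where it is established by an argument-principle computation for $\arg\varpi$ along $\partial\Omega$. Your Gauss--Bonnet reading --- flat metric $\left|\varpi\right|$, trajectory arcs as geodesics, angle distortion by the factor $(n_{j}+2)/2$ under the natural parameter, exterior angle $\pi\beta_{j}$, concentrated curvature $-\pi n$ at a critical point of multiplicity $n$ --- is the standard geometric counterpart of that computation, and it correctly yields the general identity $\sum_{j}\beta_{j}=2+\sum n_{i}$ (sum over interior critical points), which specializes to the paper's dichotomy via $n=-2$ at the origin. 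So in outline your route is sound and substantively equivalent to the quoted source; what it buys is a transparent explanation of \emph{why} the double pole flips the right-hand side from $2$ to $0$.

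Two points need repair. First, the delicate step you yourself flag is stated with the wrong target value: the small circle $|z|=r$ about the double pole is asymptotically a closed geodesic of the limiting cylinder metric $|ab|\,|dz|^{2}/|z|^{2}$ (its geodesic curvature is $O(r)$ while its $\left|\varpi\right|$-length stays bounded near $2\pi|ab|^{1/2}$), so its total turning tends to $0$, not $2\pi$. Indeed, Gauss--Bonnet on the flat annulus $\Omega\setminus\overline{D_{r}}$, which has $\chi=0$, reads $\sum_{j}\pi\beta_{j}+T_{r}=0$ with $T_{r}$ the turning of the inner circle; $T_{r}\to 0$ gives $\sum_{j}\beta_{j}=0$, whereas turning $\pm 2\pi$ --- the value appropriate to a circle about a \emph{regular} point --- would give $\sum_{j}\beta_{j}=\mp 2$ and destroy the dichotomy. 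Equivalently: the cone angle at the origin is $\pi(n+2)=0$, so the concentrated curvature is $2\pi-0=2\pi=-\pi n$, consistent with your own bookkeeping; the verification must therefore confirm turning $0$, and as literally written your check would certify the wrong thing. Second, your claimed ``routine check'' that no finite critical point other than $0$ can be interior to a $\varpi$-polygon is false in general: one can bound a region by horizontal and vertical arcs through regular points that encloses the zero $a$, in which case $\sum_{j}\beta_{j}=3$ (or $1$, if $0$ is also inside). The paper's statement, with right-hand side $0$ or $2$ only, is really Strebel's general identity $2+\sum n_{i}$ specialized to the polygons arising in the paper, whose zeros sit on the boundary as vertices; you should either keep the general right-hand side or state that hypothesis explicitly rather than attempt to prove it.
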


Any $\varpi $-polygon made of horizontal trajectories and containing the
origin can be bounded either by two critical trajectories starting and
ending at $a,b,$ or it must contain $\infty $ at its boundary and at least
one inner angle $\dfrac{4\pi }{3}.$

\begin{corollary}
\label{1coroll teich}In the latter case there are a priori three
possibilities:

\begin{itemize}
\item either $\Omega $ is bounded by two critical arcs emanating from the
same zero of $\varpi $ and forming an angle $\dfrac{4\pi }{3},$ encircling
the origin and going to $\infty $ in the same direction, or

\item $\Omega $ is bounded by two critical arcs emanating from the same zero
of $\varpi $ and forming an angle $\dfrac{2\pi }{3},$ encircling the origin
and the other zero, and going to $\infty $ in the same direction, or

\item $\Omega $ is bounded by two critical arcs emanating from different
zeros of $\varpi $ and forming an angle $\dfrac{4\pi }{3}$ going to $\infty $
in the opposite directions.
\end{itemize}
\end{corollary}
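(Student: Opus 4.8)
The plan is to run the Teichm\"uller lemma (Lemma~\ref{teich}) as an angle-counting device on the origin-containing $\varpi$-polygon $\Omega$ and to read the three admissible configurations directly off the resulting balance. I would first record the local data. Since $\varpi$ has no recurrent trajectories (Jenkins), $\partial\Omega$ is a finite union of critical horizontal arcs whose only finite vertices are the simple zeros $a,b$; at such a zero the three separatrices meet at $2\pi/3$, so the interior angle of $\Omega$ is either $2\pi/3$ or $4\pi/3$ and, with $n_j=1$,
\[
\beta_j=1-\theta_j\frac{n_j+2}{2\pi}=\begin{cases}0,&\theta_j=2\pi/3,\\[2pt]-1,&\theta_j=4\pi/3.\end{cases}
\]
At the remaining boundary vertex $\infty$ the differential has a pole of order $4$, so $n_\infty=-4$ and $\beta_\infty=1+\theta_\infty/\pi$; using the local model $w\sim iz$ one sees that there are exactly two asymptotic directions ($\pm i\infty$), so two boundary arcs reach $\infty$ either in the \emph{same} asymptotic direction (a cuspidal, strip-type end, $\beta_\infty=1$) or in \emph{opposite} ones (a half-plane-type end, $\beta_\infty=2$).

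Next I would apply Lemma~\ref{teich}. Because $0\in\Omega$, the double pole at the origin makes the balance read $\sum_{\partial\Omega}\beta_j=0$ \emph{provided the origin is the only critical point inside} $\Omega$; if the second zero also lies in $\Omega$ it carries cone angle $3\pi$ and shifts the total to $\sum_{\partial\Omega}\beta_j=1$. With these two rules the enumeration is immediate. If both bounding arcs emanate from one zero, the finite vertices are that zero and $\infty$. With the other zero outside, $\beta+\beta_\infty=0$ forces $\beta=-1,\ \beta_\infty=1$: the angle at the zero is $4\pi/3$, the arcs leave in the same direction, and $\Omega$ encircles only the origin --- the first possibility. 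If instead the other zero lies inside, the balance becomes $\beta+\beta_\infty=1$, which closes only with $\beta=0,\ \beta_\infty=1$: the angle at the zero is $2\pi/3$, the arcs again leave in the same direction, and $\Omega$ encircles the origin and the second zero --- the second possibility. Finally, if the two arcs emanate from different zeros then $a,b\in\partial\Omega$, the origin is the sole interior critical point, and $\beta_a+\beta_b+\beta_\infty=0$; the only solution with $\beta_a,\beta_b\in\{0,-1\}$ and $\beta_\infty\in\{1,2\}$ is $\beta_a=\beta_b=-1,\ \beta_\infty=2$, i.e. both angles equal $4\pi/3$ and the arcs diverge to $\infty$ in opposite directions (the two zeros being joined, in this configuration, by a short arc) --- the third possibility.

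It then remains to confirm that no further sign pattern is compatible with the balance and that each of the three patterns is realized by an actual $\varpi$-polygon, which follows from the permitted local pictures at $a,b,0,\infty$ described in the introduction. The \textbf{main obstacle} is the vertex at infinity: one must justify the admissible values of $\theta_\infty$ (hence of $\beta_\infty$) for a fourth-order pole carrying exactly two asymptotic directions, distinguishing the strip end from the half-plane end, and one must track with care whether the second zero lies inside or outside $\Omega$, since that single alternative changes the Teichm\"uller total from $0$ to $1$ and is precisely what separates the first possibility from the second.
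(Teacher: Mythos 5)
Your overall strategy---apply Teichm\"uller's lemma at the boundary vertices of $\Omega$ and enumerate the admissible angle patterns---is exactly what the paper intends: the corollary is stated there without any proof, as a direct consequence of Lemma~\ref{teich}, so there is no written argument to compare against, and your local computations are the right ones. In particular, $\beta=0$ or $-1$ at a simple zero according as the interior angle is $2\pi/3$ or $4\pi/3$, and $\beta_\infty=1$ or $2$ according as the two boundary arcs reach $\infty$ in the same or in opposite asymptotic directions. You are also more careful than the paper itself in noting that a zero lying \emph{inside} $\Omega$ contributes $+1$ to the Teichm\"uller total (the general form $\sum_j\beta_j=2+\sum_{\mathrm{int}}n_i$); this correction is genuinely needed, since the second possibility of the corollary is inconsistent with Lemma~\ref{teich} as literally stated in the paper.

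The gap is in the step you call ``immediate'': your balance equations have more solutions than you list, so the trichotomy does not follow from the count alone. When both arcs emanate from the same zero and the other zero lies inside $\Omega$, the equation $\beta+\beta_\infty=1$ is solved not only by $(0,1)$ but also by $(-1,2)$, i.e.\ angle $4\pi/3$ at the zero and ends going to $\infty$ in \emph{opposite} directions. This pattern is not vacuous: it is realized in the paper's own Corollary~\ref{1spiral2spirals} (second bullet, Figure~\ref{Fig6}, right), where from each zero two trajectories diverge to $\infty$ in opposite directions and the third spirals into the origin; the side of angle $4\pi/3$ of the curve through $a$ is then a $\varpi$-polygon containing both $0$ and $b$, with $\beta_a+\beta_\infty=-1+2=1=2-2+1$, perfectly consistent with Teichm\"uller. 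Likewise, in your ``different zeros'' case the equation $\beta_a+\beta_b+\beta_\infty=0$ admits, besides $(-1,-1,2)$, the solutions $(-1,0,1)$ and $(0,-1,1)$ (one angle $4\pi/3$, one angle $2\pi/3$, strip-type end at infinity), which you silently discard. So either additional geometric input must be supplied to exclude these patterns---nothing in your argument does---or one must concede that the a priori list produced by the balance is longer than the three items claimed. A secondary, smaller gap: you assume without justification that $\partial\Omega$ has a single vertex at $\infty$ and no slit-type vertices (interior angle $2\pi$ at a zero, $\beta=-2$); strip-like polygons whose boundary passes through $\infty$ twice, and slit configurations, are not ruled out by anything you say, even though they can ultimately be discarded by the same kind of balance computation.
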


\begin{corollary}
\label{2spirals1zero}There cannot exist two horizontal, or vertical
trajectories emanating from the same zero $a$ or $b$ and diverging (radially
or spirally) to the origin.
\end{corollary}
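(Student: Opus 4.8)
The plan is to argue by contradiction using the Teichm\"{u}ller lemma. Suppose two horizontal trajectories $\gamma_{1},\gamma_{2}$ emanate from the same zero, say $a$, and both tend to the origin (the case of $b$ is identical, and the vertical case follows by replacing $\varpi$ with $-\varpi$, whose horizontal trajectories are the vertical trajectories of $\varpi$, leaving every angle and multiplicity below unchanged). First I would verify that $\gamma_{1}\cup\gamma_{2}$ is a Jordan curve. Each $\gamma_{i}$ is a simple arc, and since the only finite critical points of $\varpi$ are $a,b,0$, a trajectory issuing from $a$ and reaching $0$ cannot pass through $b$, for it would terminate there; hence $\gamma_{1}\cap\gamma_{2}=\{a,0\}$ and their union, together with the endpoints, is a simple closed curve. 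This remains valid in the log-spiral case $ab\notin\mathbb{R}$, where each $\gamma_{i}$ is still a simple arc winding into the double pole. Only the radial case $ab>0$ and the spiral case $ab\notin\mathbb{R}$ need be treated, matching the phrase \emph{radially or spirally}, since in the circular case $ab<0$ the horizontal trajectories encircle the origin rather than tend to it.

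Next, let $\Omega$ be the bounded Jordan domain enclosed by $\gamma_{1}\cup\gamma_{2}$. It is a $\varpi$-polygon whose only boundary vertices are $a$ and $0$, and $0$ lies \emph{on} $\partial\Omega$, hence not in the interior of $\Omega$. By Lemma~\ref{teich}, the case $0\notin\Omega$ forces $\sum_{j}\beta_{j}=2$, the sum ranging over the singular points on $\partial\Omega$. The heart of the argument is the evaluation of these $\beta_{j}$. At the double pole we have $n_{0}=-2$, so $n_{0}+2=0$ and $\beta_{0}=1$ irrespective of the interior angle there (which is anyway multiplied by $0$); this is exactly why the pole contributes the fixed value $1$. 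At the simple zero we have $n_{a}=1$, and since $\gamma_{1},\gamma_{2}$ are two of the three trajectories leaving $a$ under mutual angles $2\pi/3$, the interior angle of $\Omega$ at $a$ is either $2\pi/3$ or $4\pi/3$, whence $\beta_{a}=1-\theta_{a}\frac{3}{2\pi}\in\{0,-1\}$. Therefore $\sum_{j}\beta_{j}\le\beta_{a}+\beta_{0}\le 0+1=1<2$, contradicting Lemma~\ref{teich}.

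The step I expect to require the most care is controlling which singular points actually lie on $\partial\Omega$ or inside $\Omega$, in particular whether the second zero $b$ can sit in the interior of $\Omega$. If $b\in\Omega$, the three trajectories issuing from it are trapped in $\overline{\Omega}$ and, there being no recurrent trajectories by Jenkins' Three Pole Theorem, must terminate at $a$, $b$ or $0$; accounting for an interior $b$ via the general form of the Teichm\"{u}ller lemma replaces the right-hand value $2$ by $2+(-2)+1=1$, which only sharpens the inequality to $\sum_{j}\beta_{j}\le 0<1$. Either way the contradiction survives, because the two decisive facts are untouched: $\beta_{0}=1$ at the double pole, while $\beta_{a}\le 0$ at a simple zero, so the boundary sum can never reach the value demanded by Lemma~\ref{teich}. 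The only genuinely delicate point is making the Jordan-domain and vertex-count rigorous in the infinitely winding spiral case; the remainder is a short angle computation.
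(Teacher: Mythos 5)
Your overall strategy (Teichm\"{u}ller's lemma, with the count $\beta_a\le 0$ at the zero against the value $2$ required when the pole is not interior) is the same as the paper's, but your choice of $\varpi$-polygon differs in a way that creates a genuine gap. You take $\Omega$ to be the domain bounded by the two trajectories themselves, so that the double pole $0$ is a \emph{vertex} of $\partial\Omega$. In the radial case this is defensible: the two arcs reach $0$ along definite directions, the interior angle $\theta_0$ exists, and $\beta_0=1$ independently of $\theta_0$, so $\beta_a+\beta_0\le 1<2$ gives the contradiction. But in the spiral case $ab\notin\mathbb{R}$ --- the generic case, and explicitly part of the statement --- there is no interior angle at $0$ at all: each $\gamma_i$ winds infinitely many times around the origin, the tangent turning along each side is unbounded, and the argument of the coefficient of $\varpi$ (which behaves like $\mathrm{const}-2\arg z$ near $0$) is unbounded along the boundary. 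Teichm\"{u}ller's lemma, whose proof is the argument principle applied to the indented boundary and therefore needs finite turning and well-defined angles at the vertices, cannot be applied to this polygon. You flag exactly this point (``the only genuinely delicate point'') but then dismiss it as ``a short angle computation''; it is not a computation, it is the whole difficulty, and as written your proof covers only the radial case.

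The paper's proof avoids the issue by never letting the pole sit on the boundary: it truncates the two trajectories and joins them by a piece of an \emph{orthogonal} trajectory near the origin. The resulting polygon has vertices only at $a$ (angle $2\pi/3$ or $4\pi/3$, so $\beta_a\in\{0,-1\}$) and at two regular corner points (where horizontal and vertical trajectories meet orthogonally, so each $\beta\le 1/2$); choosing the orthogonal arc so that $0$ lies outside the polygon, Lemma \ref{teich} demands $\sum\beta_j=2$ while the sum is at most $1$. This works verbatim in the radial and spiral cases alike, and is the standard way to handle spiral entry into a double pole: cut before you reach it. Two smaller slips in your write-up: your case labels are reversed (for horizontal trajectories it is $ab<0$ that is radial and $ab>0$ that is circular, cf.\ the loop statements in Corollaries \ref{2spirals2zeros} and \ref{1spiral2spirals}); and in your interior-$b$ bookkeeping the right-hand side should become $2+n_b=3$ (the pole, being a boundary vertex, contributes through $\beta_0$, not through the interior sum), not $2+(-2)+1=1$ --- the contradiction survives, but not for the reason you wrote.
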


\begin{proof}
If there emanate two trajectories from $a$ or $b$ diverging to the origin,
consider an $\varpi $-polygon formed by their pieces and a piece of an
orthogonal trajectory that diverges to the origin. Clearly this $\varpi $%
-polygon violates Lemma \ref{teich}.
\end{proof}

\begin{corollary}
\label{1spiral2spirals} Assume that there is no critical trajectory of $%
\varpi $, then:\newline
if ($ab\notin 
%TCIMACRO{\U{211d} }%
%BeginExpansion
\mathbb{R}
%EndExpansion
,$ or $ab<0$), we get,

\begin{itemize}
\item either, there exists one trajectory diverging to the origin, four
trajectories diverge to infinity in the same direction, and one trajectory
diverges to infinity in the other direction.

\item or, from each zero $a$ and $b,$ there emanates one trajectory
diverging to the origin, and two trajectories diverge to infinity in the
opposite directions.
\end{itemize}

\noindent If $ab>0,$ then, from one zero, there is a loop encircling the
origin, the third trajectory diverges to infinity. All trajectories
emanating from the other zero diverge to infinity, two of them, in the same
direction and form with infinity a domain that contains the loop, the third
one diverges to infinity in the opposite direction, see Figure \ref{Fig6}.
\begin{figure}[h]
\begin{center}
\includegraphics[width=10cm,height=6cm]{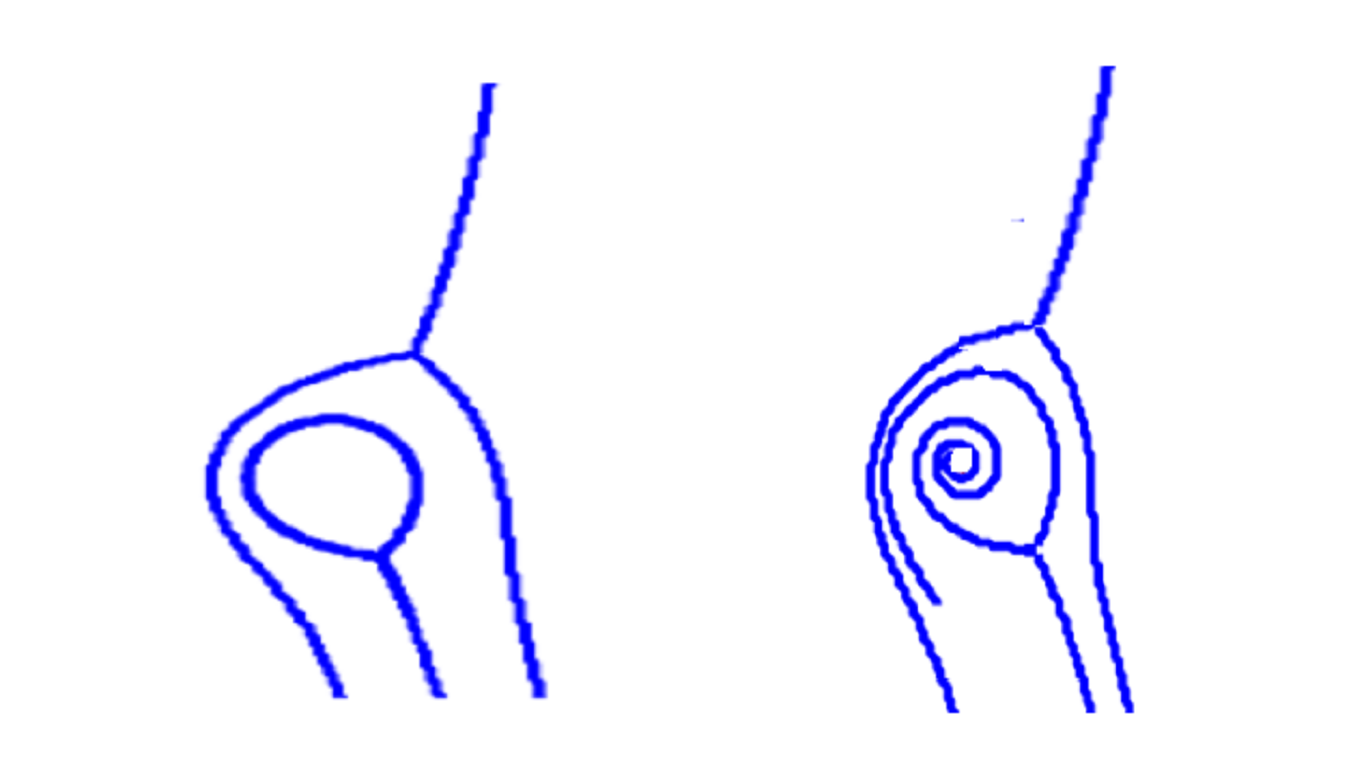}
\caption[le titre]{ Critical graph when $(\sqrt{a}\pm \sqrt{b})^2\notin\mathbb{R}$, $ab > 0$ (left), $ab \notin \mathbb{R}$ (right).}
\label{Fig6}
\end{center}
\end{figure}
\end{corollary}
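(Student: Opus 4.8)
The plan is to prove Corollary \ref{1spiral2spirals} by systematically eliminating impossible configurations under the standing assumption that no critical (short) trajectory exists. The backbone of the argument is that the total trajectory structure is rigidly constrained: from each of the two simple zeros $a$ and $b$ there emanate exactly three horizontal trajectories at equal angles $2\pi/3$, the point at infinity is a pole of order $4$ (so trajectories entering a neighborhood of $\infty$ escape parallel to the imaginary axis, by the earlier discussion via \cite[Theorem 7.4]{Strebel:84}), and the origin is a double pole whose local behavior (radial, circular, or log-spiral) is dictated by the sign/reality of $ab$. Jenkins' Three Pole Theorem rules out recurrent trajectories, so every one of the six emanating trajectory rays must terminate either at $0$ or at $\infty$. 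The assumption ``no short trajectory'' means none of these rays connects $a$ to $b$ or returns to its own zero as a finite arc, so each ray has its endpoint at $0$ or $\infty$.

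The main combinatorial step is then a counting/parity analysis of how the six rays distribute between the two admissible sinks, $0$ and $\infty$. First I would invoke Corollary \ref{2spirals2zeros} and Corollary \ref{2spirals1zero} as the two key exclusion principles: the former forbids \emph{two} trajectories (one from $a$, one from $b$) both spiralling into the origin, and the latter forbids \emph{two} trajectories from the \emph{same} zero both diverging to the origin. Together these force that at most one trajectory total reaches the origin when $ab\notin\mathbb{R}$ (the spiral case). Combined with the fact that a neighborhood of $\infty$ admits only two opposite asymptotic directions, the number of rays that can escape to infinity in each direction is tightly limited, and one reduces to a short finite list of admissible distributions. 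Case $ab>0$ is handled separately because the origin is then a \emph{center} (circular local structure), which changes the local count: here a loop can form around $0$, so I would show that exactly one zero emits a loop encircling the origin while the other zero sends all three of its rays to infinity, matching the stated conclusion and Figure \ref{Fig6}.

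Concretely, the order of steps is: (1) record the local data at $a$, $b$, $0$, $\infty$ and note that absence of short trajectories sends every ray to $0$ or $\infty$; (2) in the case $ab\notin\mathbb{R}$ or $ab<0$, apply Corollaries \ref{2spirals2zeros} and \ref{2spirals1zero} to bound the number of origin-bound rays by one, then use the two-direction constraint at $\infty$ to split into the two bulleted subcases (a single origin-bound ray with a $4$--$1$ split to infinity, versus one origin-bound ray from each zero with a $2$--$2$ split); (3) in the case $ab>0$, use the circular local structure at the origin together with the Teichmüller relation of Lemma \ref{teich} (applied to the $\varpi$-polygon bounded by the loop and the origin) to force the loop-plus-divergent configuration from one zero and the fully divergent configuration from the other.

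The hard part, I expect, will be step (2)/(3): rigorously ruling out the spurious distributions that are not listed. The exclusion corollaries forbid certain pairs of origin-bound rays, but one must also verify that the \emph{asymptotic directions} at infinity are consistent—i.e., that the rays which escape cannot all go the same way in violation of the opposite-ray behavior near a fourth-order pole, and that no admissible $\varpi$-polygon encircling the origin slips through Lemma \ref{teich} with an unlisted inner-angle pattern. This is where the argument is genuinely delicate rather than merely a count, because the local angle data ($2\pi/3$ at the zeros, the forced $4\pi/3$ inner angles from Corollary \ref{1coroll teich}) must be reconciled with the global sink structure, and overlooking a borderline polygon would leave a gap.
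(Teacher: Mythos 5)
Your skeleton (local data at $a$, $b$, $0$, $\infty$; exclusion corollaries; Teichm\"uller lemma; then a case count) matches the paper's proof in spirit, but there is a genuine error at the heart of your step (2): you invoke Corollary \ref{2spirals2zeros} to conclude that at most one trajectory \emph{in total} can reach the origin when $ab\notin\mathbb{R}$. That corollary's hypothesis is $(\sqrt{a}+\sqrt{b})^{2}\in\mathbb{R}$ or $(\sqrt{a}-\sqrt{b})^{2}\in\mathbb{R}$, which is not available under the standing assumption of the statement; the regime of this corollary (no critical trajectory) is exactly the one treated in the paper as $(\sqrt{a}\pm\sqrt{b})^{2}\notin\mathbb{R}$ (see the caption of Figure \ref{Fig6}), where Corollary \ref{2spirals2zeros} is silent. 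Worse, the conclusion you extract from it is false: the second bullet of the very statement you are proving describes a configuration with \emph{two} origin-bound trajectories, one from each zero, and your own subcase list then reintroduces exactly that configuration (the ``$2$--$2$ split''), contradicting the ``at most one'' bound you had just asserted. So the argument both misapplies a lemma and is internally inconsistent; only Corollary \ref{2spirals1zero} (at most one origin-bound ray \emph{per zero}) may legitimately be used here.

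For comparison, the paper's proof uses only that per-zero bound and then runs a dichotomy on whether some zero, say $a$, sends all three of its rays to infinity. If it does, Lemma \ref{teich} forces two of them, $\gamma_{1},\gamma_{2}$, to share an asymptotic direction and bound with infinity a domain $\mathcal{D}$ containing the origin; Corollary \ref{1coroll teich} pins the inner angle of $\mathcal{D}$ at $2\pi/3$, which forces $b\in\mathcal{D}$ as well, and from there the configuration at $b$ is forced: two rays of $b$ go to infinity in the direction of $\gamma_{1},\gamma_{2}$ and the third ray of $b$ must go to the origin --- this is precisely the $4$--$1$ split of the first bullet. If no zero sends all three rays to infinity, each zero has exactly one origin-bound ray, giving the second bullet. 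Your proposal never supplies this angle-and-containment argument (you flag it yourself as ``the hard part'' and leave it open), and once the illegitimate use of Corollary \ref{2spirals2zeros} is removed, your counting scheme has no mechanism that forces the origin-bound ray from the second zero in the first case. The $ab>0$ sketch in your step (3) is closer to what is needed, but it too must confront the fact that a loop is, by the paper's own definition, a finite critical trajectory, so the case analysis cannot begin by sending every ray to $0$ or $\infty$ as you do in step (1).
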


\begin{proof}
If $\left( \sqrt{a}\pm \sqrt{b}\right) ^{2}\notin 
%TCIMACRO{\U{211d} }%
%BeginExpansion
\mathbb{R}
%EndExpansion
$ and $ab\notin 
%TCIMACRO{\U{211d} }%
%BeginExpansion
\mathbb{R}
%EndExpansion
$, then, by Lemma \ref{real} there is no critical trajectory, and by
Corollary \ref{2spirals1zero}, from each zero, there emanates at most one
trajectory that diverges to the origin. Suppose that all trajectories
emanating from a zero (for instance $a$ ) diverge to infinity, then, by
Lemma \ref{teich}, two of them, say $\gamma _{1},\gamma _{2}$ diverge in the
same direction and form, with infinity, a domain $\mathcal{D}$ which
contains the origin. By Lemma \ref{teich}, the third trajectory emanating
from $a,$ cannot diverge to infinity in the same direction as $\gamma
_{1},\gamma _{2}$. Corollary \ref{1coroll teich} implies that the interior
angle of $\mathcal{D}$ between $\gamma _{1}$ and $\gamma _{2}$ equals $\frac{%
2\pi }{3}.$ Then, the domain $\mathcal{D}$ must contain the origin and the
other zero $b.$\newline
All these considerations show that there emanate two trajectories from $b$
with the angle $\frac{4\pi }{3}$ that diverge to infinity in the direction
of $\gamma _{1},\gamma _{2},$ and which form, with infinity, a domain
containing the origin. The third trajectory emanating from $b$ diverges to
the origin.\newline
The remaining cases are settled in a similar way
\end{proof}

\begin{proof}[Proof of Propostion 2.]
\begin{enumerate}
\item If $\left( \sqrt{a}\pm \sqrt{b}\right) ^{2}\in 
%TCIMACRO{\U{211d} }%
%BeginExpansion
\mathbb{R}
%EndExpansion
$, then, $ab>0$ and we have a loop around $0$. By a change of variable $z=%
\overline{y},$ we see that the critical graph of $\varpi $ is symmetric with
respect to the real axis. Thus it follows that

\begin{itemize}
\item either, $a,b\in 
%TCIMACRO{\U{211d} }%
%BeginExpansion
\mathbb{R}
%EndExpansion
$ such that $ab>0$ in which case, the segment $\left[ a,b\right] $ belongs
to the critical graph, and the loop passes through exactly one zero. We have
totally two critical trajectories.

\item or $a=\overline{b}$ and the loop passes through $a$ and $b$, and,
again, we have two critical trajectories \cite{Kuijlaars}, see Figure \ref{Fig7}.
 \begin{figure}[h]
\begin{center}
\includegraphics[width=10cm,height=6cm]{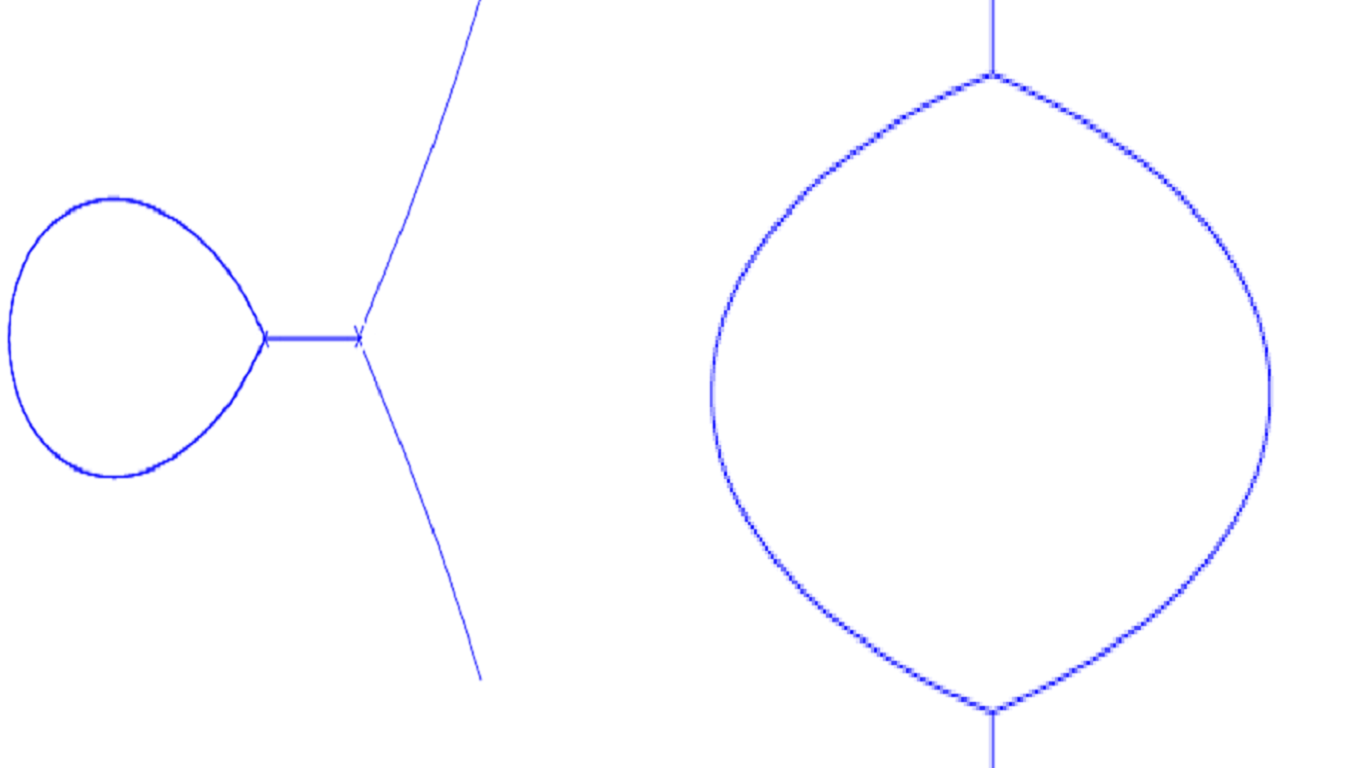}
\caption[le titre]{ Critical graph when $(\sqrt{a}\pm \sqrt{b})^2\in \mathbb{R}$, $a,b > 0$ (left), $b=\bar{a}$ (right).}
\label{Fig7}
\end{center}
\end{figure}
\end{itemize}

\item If $\left( \sqrt{a}+\sqrt{b}\right) ^{2}\in 
%TCIMACRO{\U{211d} }%
%BeginExpansion
\mathbb{R}
%EndExpansion
$ and $\left( \sqrt{a}-\sqrt{b}\right) ^{2}\notin 
%TCIMACRO{\U{211d} }%
%BeginExpansion
\mathbb{R}
%EndExpansion
$ then $ab\notin 
%TCIMACRO{\U{211d} }%
%BeginExpansion
\mathbb{R}
%EndExpansion
$ or $ab<0.$\newline
Suppose that $ab\notin 
%TCIMACRO{\U{211d} }%
%BeginExpansion
\mathbb{R}
%EndExpansion
$. By Corollaries \ref{2spirals2zeros}, \ref{2spirals1zero} and \ref%
{1spiral2spirals}, the critical graph of $\varpi $ possesses exactly one
trajectory that emanates from a zero, say $a$ and diverges to the origin.
From the other zero $b$, there emanate two trajectories, say $\gamma
_{1},\gamma _{2},$ diverging to infinity in the same direction and they
form, with infinity, a domain that contains the origin and $a$. From $a,$
there emanates at least one vertical trajectory $\sigma _{1}$ that diverges
to infinity (parallel to the real axis). This trajectory must intersects $%
\gamma _{1}$ or $\gamma _{2}$ at some point $M$. Let $\gamma $ be a path
connecting $a$ and $b$ in $%
%TCIMACRO{\U{2102} }%
%BeginExpansion
\mathbb{C}
%EndExpansion
^{\ast },$ formed by a piece from $a$ to $M$ of $\sigma _{1}$ and another
one from $M$ to $b$ of $\gamma _{1}$ or $\gamma _{2}$. It follows that 
\begin{eqnarray}
\Re \left( \int_{\gamma }\dfrac{\sqrt{\ D\left( z\right) }}{z}dz\right) 
&=&\Re \left( \int_{a}^{M}\dfrac{\sqrt{\ D\left( z\right) }}{z}dz\right)
\neq 0,  \label{M and N} \\
\Im \left( \int_{\gamma }\dfrac{\sqrt{\ D\left( z\right) }}{z}dz\right) 
&=&\Im \left( \int_{M}^{b}\dfrac{\sqrt{\ D\left( z\right) }}{z}dz\right)
\neq 0.  \notag
\end{eqnarray}%
By Corollary \ref{2spirals1zero}, there emanates at least one vertical
trajectory $\sigma _{2}$ from $a$, which, either, connects $a$ to $b,$ or,
it diverges to infinity intersecting $\gamma _{1}$ or $\gamma _{2}$ at some
point $N.$ The first case contradicts equations (\ref{M and N}) and the fact
that $\left( \sqrt{a}+\sqrt{b}\right) ^{2}\in 
%TCIMACRO{\U{211d} }%
%BeginExpansion
\mathbb{R}
%EndExpansion
.$ Let $\gamma ^{\prime }$ be a path connecting $a$ and $b$ in $%
%TCIMACRO{\U{2102} }%
%BeginExpansion
\mathbb{C}
%EndExpansion
^{\ast }$ formed by a piece from $a$ to $N$ of $\sigma _{2}$ and another one
from $N$ to $b$ of $\gamma _{1}$ or $\gamma _{2}$. Clearly, by lemma \ref%
{teich}, $\gamma $ and $\gamma ^{\prime }$ are not homotopic in $%
%TCIMACRO{\U{2102} }%
%BeginExpansion
\mathbb{C}
%EndExpansion
^{\ast }$ and equations (\ref{M and N}) are valid with $\gamma ^{\prime }.$
This contradicts the fact that $\left( \sqrt{a}+\sqrt{b}\right) ^{2}\in 
%TCIMACRO{\U{211d} }%
%BeginExpansion
\mathbb{R}
%EndExpansion
.$ \newline
We conclude that a critical trajectory exists. The case $ab<0$ can be
settled in the same way. See Figure \ref{Fig8}.
\begin{figure}[h]
\begin{center}
\includegraphics[width=10cm,height=6cm]{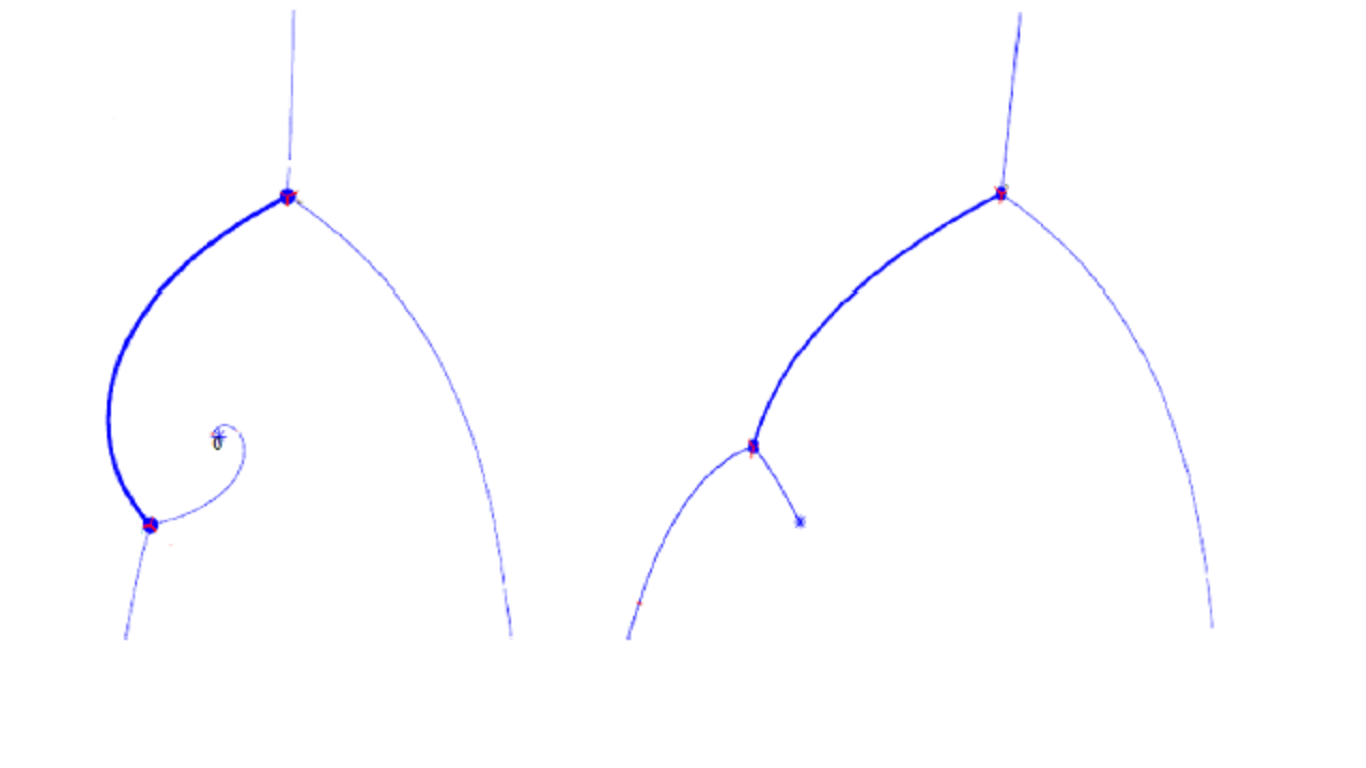}
\caption[le titre]{ Critical graph when $(\sqrt{a}+\sqrt{b})^2\in \mathbb{R}$ and $(\sqrt{a}-\sqrt{b})^2\notin\mathbb{R}$ , $ab \notin\mathbb{R}$ (left), $ab < 0$ (right).}
\label{Fig8}
\end{center}
\end{figure}
\item If $\left( \sqrt{a}\pm \sqrt{b}\right) ^{2}\notin 
%TCIMACRO{\U{211d} }%
%BeginExpansion
\mathbb{R}
%EndExpansion
,$ then by Lemma \ref{real}, 
\begin{equation*}
\Re \left( \int_{a}^{b}\dfrac{\sqrt{\ D\left( z\right) }}{z}dz\right) \neq 0,
\end{equation*}%
for any path of integration in $%
%TCIMACRO{\U{2102} }%
%BeginExpansion
\mathbb{C}
%EndExpansion
^{\ast }$ and there is no critical trajectory.
\end{enumerate}
\end{proof}

\bigskip

{\small
{\em Authors' addresses}:
\\{\em Mohamed Jalel Atia}, Facult\'e des sciences de Gab\`{e}s, Cit\'e Erriadh Zrig 6072 Gab\`{e}s Tunisia
 e-mail: \texttt{jalel.atia@\allowbreak gmail.com}.
\\{\em Faouzi Thabet}, ISSAT de Gab\`{e}s, Avenue Omar Ibn El Khattab 6029 Gab\`{e}s  Tunisia
 e-mail: \texttt{faouzithabet@\allowbreak yahoo.fr}.
}
\end{document}